\numberwithin{equation}{section}
\newcommand{\eq}{\begin{equation}}
\newcommand{\qe}{\end{equation}}
\newcommand{\E}{\mathbb{E}}
\newcommand{\N}{\mathbb{N}}
\newcommand{\R}{\mathbb{R}}
\newcommand{\p}{\mathbb{P}}
\theoremstyle{plain}
\newtheorem{thm}{Theorem}
\newtheorem{prop}[thm]{Proposition}
\newtheorem{cor}[thm]{Corollary}
\newtheorem{defn}{Definition}[section]
\theoremstyle{definition}
\theoremstyle{remark}
\newtheorem*{rem}{Remark}
\newtheorem{conjec}{Conjecture}
\author{Kevin Tanguy \\ University of Angers, France}
\address{Kevin Tanguy is with the LAREMA (CNRS UMR 6093). Universit\'e d'Angers 49035 Angers, France.}
\begin{document}
\sloppy
\pagestyle{headings} 
\title{Talagrand inequality at second order and application to Boolean analysis}
\date{Note of \today}
\keywords{Boolean analysis, influences, hypercontractivity, functional inequalities}


\email{kevin.tanguy@univ-angers.fr}
\urladdr{http://perso.math.univ-toulouse.fr/ktanguy/}

\maketitle

\begin{abstract}
This note is concerned with an extension, at second order, of an inequality on the discrete cube $C_n=\{-1,1\}$ (equipped with the uniform measure) due to Talagrand (\cite{TalL1L2}). As an application, the main result of this note is a Theorem in the spirit of a famous result from Kahn, Kalai and Linial (cf. \cite{KKL}) concerning the influence of Boolean functions. The notion of the influence of a couple of coordinates $(i,j)\in\{1,\ldots,n\}^2$ is introduced in section \ref{two} and the following alternative is obtained : for any Boolean function $f\,:\, C_n\to \{0,1\}$, either there exists a coordinate with influence at least of order $(1/n)^{1/(1+\eta)}$, with $\, 0<\eta<1$ (independent of $f$ and $n$) or there exists a couple of coordinates $(i,j)\in\{1,\ldots,n\}^2$ with $i\neq j$, with influence at least of order $(\log n/n)^2$. In section \ref{four}, it is shown that this extension of Talagrand inequality can also be obtained, with minor modifications, for the standard Gaussian measure $\gamma_n$ on $\R^n$  ; the obtained inequality can be of independent interest. The arguments rely on interpolation methods by semigroup together with hypercontractive estimates. At the end of the article, some related open questions are presented.
\end{abstract}
\maketitle 
\section{Introduction}

The notion of influence of variables of Boolean functions has been extensively studied over the last twenty years with applications in various areas such as random graph theory, percolation theory and Gaussian geometry, (cf. e.g. the survey \cite{KS}). Now, let us introduce the setting of our work, for more details on the analysis of Boolean functions we refer the reader to \cite{Odo, GarStei}. Let $n\geq 1$ be and consider the discrete cube $C_n=\{-1,1\}^n$ equipped with the uniform measure $\mu^n$. The influence of the $i$-th coordinate of any function $f\,:C_n\,\to\{0,1\}$ is defined as follow.

\begin{defn}
Consider some function $f\,:C_n\,\to\{0,1\}$. For any $i\in\{1,\ldots,n\}$, the influence of the $i$-th coordinate is given by

\begin{equation}\label{eq.influence}
I_i(f)=\p\big(f(X)\neq f(\tau_iX)\big)
\end{equation}
where $\mathcal{L}(X)=\mu^n$ and $\tau_ix=(x_1,\ldots,-x_i,\ldots,x_n)$ for any $x\in C_n$ (i.e. $\tau_ix$ corresponds to the point $x$ with its $i$-th coordinate being flipped).
\end{defn}
\begin{rem}
\begin{enumerate}

\item For further purposes notice that $I_i(f)$ can also be equivalently expressed (if $f$ is a Boolean function) in terms of a $L^1(\mu^n)$ norm of some discrete derivative.  Namely, if the discrete derivative along the $i$-th coordinate is defined as
\[
D_i(f)=f(\tau_ix)-f(x)\quad \text{for any}\quad x\in C_n,
\]
we have $\|D_i(f)\|_1=I_i(f)$. In fact, for any $p\geq 1$,  $\|D_if\|_p^p=I_i(f)$ where $\|\cdot\|_p$ denote the norms of $L^p(\mu^n)$.
\end{enumerate}
\end{rem}

In \cite{BenLin}, the authors studied the influence of the coordinates of the the so-called Tribes function which is defined as follow : assume that $n=km$ and $x=(x_1,\ldots,x_{km})\in\{-1;1\}^{km}$, then 

\[
{\rm Tribes}_{km}(x)=\max_{i=1,\ldots, m}x^{(i)}
\]
where $x^{(i)}=\min\{x_{(i-1)k+1},\ldots, x_{ik}\}$ for any $i=1,\ldots,m$ . In particular, the function ${\rm Tribes}_{km}(x)$ takes the value $1$ if and only if, for some $i\in\{1,\ldots,m\}$, one of the tribes $(x_{(i-1)k+1},\ldots,x_{ik})$ of length $k$ is the tribes where all the coordinates are equal to $1$.\\

In their article, Ben-Or and Linial proved that the preceding function has all its coordinates with influence at least of order $\log n/n$. Besides, they have conjectured that this result is optimal. More precisely, we give below the statement of their result.

\begin{prop}[Ben-Or, Linial]\label{prop.benor.linial}
With the preceding notations, let $n$ be sufficiently large and set $k=\log n-\log\log n+\log\log 2$. Then, for all $i\in\{1,\ldots,n\}$, the following holds

\[
I_i({\rm Tribes}_n)=\frac{\log n}{n}\big(1+o(1)\big).
\]
\end{prop}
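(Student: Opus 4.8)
The plan is to reduce the assertion to an exact formula for the influence and then to read off the constant from the calibration of $k$. First I would fix a coordinate $i$, let $t$ be the index of the tribe $(x_{(t-1)k+1},\dots,x_{tk})$ containing it, and decide exactly when flipping $x_i$ changes ${\rm Tribes}_n$. Since $\tau_i$ only affects the $t$-th block, and $x^{(t)}=\min\{x_{(t-1)k+1},\dots,x_{tk}\}$ is flipped by $\tau_i$ exactly when the remaining $k-1$ coordinates of that block all equal $1$, while such a flip propagates to $\max_{j}x^{(j)}$ exactly when no other tribe consists only of $1$'s, the $i$-th coordinate is pivotal if and only if both of these events occur; they depend on disjoint blocks of coordinates, hence are independent under $\mu^n$, and therefore
\[
I_i({\rm Tribes}_n)=2^{-(k-1)}\bigl(1-2^{-k}\bigr)^{m-1},\qquad m=\tfrac nk .
\]
By symmetry of $\mu^n$ and of ${\rm Tribes}_n$ under permutations of the coordinates inside a tribe this quantity is the same for every $i$, and the $\{-1,1\}$ versus $\{0,1\}$ convention is immaterial since \eqref{eq.influence} only records the event $\{f(X)\neq f(\tau_iX)\}$.

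The next step is to evaluate the right-hand side for the prescribed $k$, whose defining property is that it makes the Tribes function asymptotically balanced: $\bigl(1-2^{-k}\bigr)^{m}\to\tfrac12$, equivalently (take logarithms and use $\log(1-u)=-u+O(u^2)$) $m\,2^{-k}\to\log 2$, where $\log$ denotes the natural logarithm. From this, on the one hand $\bigl(1-2^{-k}\bigr)^{m-1}=\bigl(1-2^{-k}\bigr)^{m}\big/\bigl(1-2^{-k}\bigr)\to\tfrac12$, and on the other hand, since $k=\log_2 n\,(1+o(1))$, one gets $2^{-k}\sim\tfrac{\log 2}{m}=\tfrac{k\log 2}{n}\sim\tfrac{(\log_2 n)\log 2}{n}=\tfrac{\log n}{n}$. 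Multiplying, $I_i({\rm Tribes}_n)=2\cdot 2^{-k}\cdot\tfrac12\,(1+o(1))=\tfrac{\log n}{n}(1+o(1))$, which is the claim.

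Two minor points remain. Since $k$ is not an integer one replaces it by $\lfloor k\rfloor$ or $\lceil k\rceil$, sets $m=\lfloor n/k\rfloor$, and handles the at most $k-1$ leftover coordinates; as these adjustments perturb $k$ by $O(1)$ and $m$ by $o(m)$ they are absorbed into the $1+o(1)$ (alternatively, one restricts to those $n$ that are exact multiples of the chosen tribe length). I expect the genuinely delicate part to be the quantitative bookkeeping: because the target is $1+o(1)$ rather than merely $\Theta(1)$, one must verify that each of the lower-order contributions — the error $O(m\,2^{-2k})$ from the expansion of $\log(1-u)$, the passages from $m-1$ to $m$ and from $k-1$ to $k$, and the rounding of $k$ — is $o\!\bigl(\tfrac{\log n}{n}\bigr)$, and it is precisely here that the lower-order term in the definition of $k$ is used.
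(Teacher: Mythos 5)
The paper does not prove this proposition; it is stated as a cited result from Ben-Or and Linial \cite{BenLin}, so there is no in-paper argument to compare against. Your reconstruction is correct and is the standard Ben-Or--Linial computation: the exact identity
\[
I_i({\rm Tribes}_n)=2^{-(k-1)}\bigl(1-2^{-k}\bigr)^{m-1},\qquad m=n/k,
\]
follows from the independence of the two events you isolate (the other $k-1$ coordinates in $i$'s tribe all equal $+1$; no other tribe all $+1$), and the asymptotics then reduce to the calibration $m\,2^{-k}\to\ln 2$, which is precisely what the choice of $k$ is designed to achieve so that $(1-2^{-k})^m\to\tfrac12$. From there $(1-2^{-k})^{m-1}\to\tfrac12$ and $2^{-k}\sim\ln 2/m = k\ln 2/n\sim \ln n/n$ give $I_i=2^{-k}(1+o(1))=\tfrac{\ln n}{n}(1+o(1))$, as you compute. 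One small point worth making explicit, which you handle implicitly in the step $(\log_2 n)\log 2=\log n$: the tribe length $k$ is naturally a base-$2$ quantity while the $\log n$ in the conclusion is the natural logarithm; the constant $\log\log 2$ in the paper's displayed formula for $k$ is in fact inconsistent as written (with $\log=\log_2$ it vanishes, since $\log_2\log_2 2=0$; the correct additive constant is $\log_2\log_2 e$), but your argument, working directly from the balance condition $m2^{-k}\to\ln 2$, is insulated from this. Your closing paragraph correctly identifies the remaining bookkeeping (rounding of $k$, $m-1$ vs.\ $m$, the $O(m2^{-2k})$ correction) needed to upgrade a $\Theta(\log n/n)$ bound to the asserted $1+o(1)$ asymptotic.
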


 Later on, in \cite{KKL}, Kahn, Kalai and Linial  have proved the conjecture. Namely 

\begin{thm}[Kahn-Kalai-Linial]\label{thm.kkl}
For any function $f\,:\,C_n\to\{0,1\}$ there exists $i~\in\{1,\ldots,n\}$  such that, for any $n\geq 1$,

\begin{equation}\label{eq.kkl}
I_i(f)\geq C{\rm Var}_{\mu^n}(f)\frac{\log n}{n}
\end{equation}
with ${\rm Var}_{\mu^n}(f)=\int_{C_n}f^2d\mu^n-\big(\int_{C_n}fd\mu^n\big)^2$ and $C>0$ is a numerical constant independent of $f$ and $n$.
\end{thm}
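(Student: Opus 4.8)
The plan is to prove the Kahn--Kalai--Linial theorem via the classical Fourier-analytic / hypercontractivity argument, which is the template the rest of the paper will imitate at second order. First I would expand $f$ in the Fourier--Walsh basis, $f=\sum_{S\subseteq\{1,\dots,n\}}\hat f(S)\chi_S$ with $\chi_S(x)=\prod_{i\in S}x_i$, and record the two basic identities: $\mathrm{Var}_{\mu^n}(f)=\sum_{S\neq\emptyset}\hat f(S)^2$ and $I_i(f)=\|D_if\|_2^2=\sum_{S\ni i}\hat f(S)^2$ (using the Boolean remark that $\|D_if\|_p^p=I_i(f)$ for all $p\ge 1$, so in particular the $L^2$ and $L^1$ norms of $D_if$ coincide, $I_i(f)=\|D_if\|_1$). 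Summing the influences gives the ``total influence'' $\sum_i I_i(f)=\sum_S |S|\hat f(S)^2$. The point of the proof is that if every individual influence were small, this sum would be forced to be small too, contradicting the variance lower bound once we know the Fourier mass cannot concentrate at low levels.

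The heart of the argument is hypercontractivity. Set $\delta=\max_i I_i(f)$ and suppose toward a contradiction that $\delta$ is much smaller than $(\log n)/n$ times the variance. For each $i$, the function $D_if$ has $\|D_if\|_2^2=I_i(f)\le\delta$ but also $\|D_if\|_1=I_i(f)\le\delta$; applying the Bonami--Beckner inequality $\|T_\rho g\|_2\le\|g\|_{1+\rho^2}$ with a suitable $\rho\in(0,1)$ to $g=D_if$ lets us trade the small $L^1$-norm for control of the low-degree Fourier mass of $D_if$. Concretely one obtains, for a well-chosen $\rho$, an estimate of the form $\sum_{S\ni i,\,|S|\le k}\hat f(S)^2\le \rho^{-2k}\|T_\rho D_if\|_2^2 \le \rho^{-2k}\delta^{2/(1+\rho^2)}$ with $\rho^2=1/(1+\text{something})$, and choosing the truncation level $k\asymp \log(1/\delta)$ makes the right-hand side summable and small. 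Summing over $i$ bounds the low-degree part $\sum_{0<|S|\le k}|S|\,\hat f(S)^2$; the high-degree part is bounded below trivially by $k\sum_{|S|>k}\hat f(S)^2$. Combining, $\sum_i I_i(f)\le (\text{small}) + \tfrac1k\sum_i I_i(f)\cdot(\dots)$, and rearranging forces $\sum_i I_i(f)$ to be small unless the variance is small, whence $\sum_{|S|>k}\hat f(S)^2$ carries a definite fraction of $\mathrm{Var}(f)$, which in turn forces $\sum_i I_i(f)\ge k\cdot(\text{const}\cdot\mathrm{Var}(f))$. Since also $\sum_i I_i(f)\le n\delta$, we get $n\delta\gtrsim \mathrm{Var}(f)\log(1/\delta)$, and a short bootstrap (if $\delta\le 1/\sqrt n$ then $\log(1/\delta)\gtrsim\log n$) yields $\delta\ge C\,\mathrm{Var}_{\mu^n}(f)(\log n)/n$, which is exactly \eqref{eq.kkl}.

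I would organize the write-up as: (i) Fourier setup and the three identities above; (ii) a hypercontractivity lemma giving the low-degree truncation bound for a single $D_if$; (iii) the summation and the rearrangement producing $n\delta\gtrsim \mathrm{Var}(f)\log(1/\delta)$; (iv) the elementary bootstrap on $\delta$ to replace $\log(1/\delta)$ by $\log n$, taking care of the trivial case where some influence already exceeds $1/\sqrt n$. The main obstacle, and the only place real care is needed, is step (ii)--(iii): one must choose the Bonami--Beckner parameter $\rho$ and the truncation degree $k$ \emph{jointly} and \emph{uniformly in $f$ and $n$} so that the resulting constant $C$ is numerical; a careless choice makes $k$ depend on the unknown $\mathrm{Var}(f)$ or makes the geometric series in the high-degree bound diverge. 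Everything else is bookkeeping with Parseval. (This same interpolation-by-hypercontractivity mechanism, recast through the semigroup $T_\rho=e^{-tL}$, is what section~\ref{four} will push to second order and transport to the Gaussian setting $\gamma_n$.)
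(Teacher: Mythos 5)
Your proof is correct, and it is essentially the classical Fourier--hypercontractivity argument of Kahn, Kalai, and Linial. The paper, however, proves Theorem~\ref{thm.kkl} by a different and much shorter route: it first states Talagrand's inequality \eqref{eq.talagrand1} (Theorem~\ref{thm.talagrand1}) and then, in the remark immediately following, deduces \eqref{eq.kkl} from it in a few lines. Concretely, the paper notes that for Boolean $f$ one has $\|D_if\|_2/\|D_if\|_1=I_i(f)^{-1/2}$, assumes without loss of generality that every $I_i(f)\leq(\mathrm{Var}_{\mu^n}(f)/n)^{1/2}$ (otherwise the conclusion is immediate), plugs this into \eqref{eq.talagrand1}, and invokes the pigeonhole principle to extract one index $i$ with $I_i(f)\gtrsim \mathrm{Var}_{\mu^n}(f)\,\log\big(n/\mathrm{Var}_{\mu^n}(f)\big)/n$. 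Both routes rest on Bonami--Beckner hypercontractivity applied to the derivatives $D_if$, but they package it differently: you perform the degree truncation at $k\asymp\log(1/\delta)$ at the level of Walsh coefficients and then bootstrap $\log(1/\delta)$ to $\log n$; the paper factors all the hypercontractive work into the already-stated inequality \eqref{eq.talagrand1} and then does only elementary algebra. The paper's choice is deliberate: the whole point of the note is to lift \eqref{eq.talagrand1} to second order (Theorem~\ref{thm.talagrand.ordre.superieur.cube.discret.ordre2}) and then obtain a KKL-type alternative (Corollary~\ref{cor.kkl.ordre2}) by exactly the same pigeonhole manipulation, so it wants the reader to see that deduction once already at first order; your direct argument, while perfectly valid as a proof of Theorem~\ref{thm.kkl}, does not set up that template. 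One minor bookkeeping slip: with the paper's convention $D_if(x)=f(\tau_ix)-f(x)$ one has $\|D_if\|_2^2=4\sum_{S\ni i}\hat f(S)^2$, so your stated identity $I_i(f)=\sum_{S\ni i}\hat f(S)^2$ is off by a factor of $4$; this is only a constant and does not affect the argument, but it should be fixed in a clean write-up.
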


By convention, in the sequel, $C>0$ is a numerical constant that may change at each occurence.\\

As we will briefly explain below, Theorem \ref{thm.kkl} can be proved with the help of Talagrand inequality which can be stated as follows.

\begin{thm}[Talagrand]\label{thm.talagrand1}
For any function $f\,:\,C_n\,\to\R$, the following inequality holds

\begin{equation}\label{eq.talagrand1}
{\rm Var}_{\mu^n}(f)\leq C\sum_{i=1}^n\frac{\|D_i f\|_2^2}{1+\log\bigg(\frac{\|D_if\|_2}{\|D_if\|_1}\bigg)},
\end{equation}

\noindent where $C>0$ is an absolute numerical constant.
\end{thm}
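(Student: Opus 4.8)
The plan is to prove Talagrand's $L^1$–$L^2$ inequality \eqref{eq.talagrand1} via a semigroup interpolation argument combined with hypercontractivity, which is the standard route and the one the paper announces it will use. Let $(P_t)_{t\geq 0}$ denote the (discrete) Ornstein–Uhlenbeck / noise semigroup on $C_n$, acting on the Fourier–Walsh expansion $f=\sum_{S\subseteq\{1,\ldots,n\}}\hat f(S)\,\chi_S$ by $P_tf=\sum_S e^{-t|S|}\hat f(S)\chi_S$. The starting point is the exact spectral identity
\begin{equation*}
{\rm Var}_{\mu^n}(f)=\sum_{S\neq\emptyset}\hat f(S)^2=\int_0^\infty\Big(-\tfrac{\d}{\d t}\|P_tf\|_2^2\Big)\,\d t=2\int_0^\infty \mathcal E(P_tf,P_tf)\,\d t,
\end{equation*}
where $\mathcal E(g,g)=\sum_S|S|\hat g(S)^2=\tfrac12\sum_{i=1}^n\|D_ig\|_2^2$ is the Dirichlet form. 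Since $D_i$ commutes with $P_t$ and $\|D_iP_tf\|_2^2=\sum_{S\ni i}e^{-2t|S|}\hat f(S)^2$, this already gives ${\rm Var}(f)=\sum_{i=1}^n\int_0^\infty e^{-2t}\big(\text{something}\big)$; more usefully one writes ${\rm Var}(f)\le \sum_{i=1}^n\int_0^\infty\|D_iP_tf\|_2^2\,\d t$ after an elementary manipulation, so the whole problem reduces to bounding $\int_0^\infty\|D_iP_tf\|_2^2\,\d t$ by $C\|D_if\|_2^2/\big(1+\log(\|D_if\|_2/\|D_if\|_1)\big)$ for each fixed $i$.

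The key step is hypercontractivity: the Bonami–Beckner inequality gives $\|P_tg\|_2\le\|g\|_{1+e^{-2t}}$. Apply it to $g=D_if$ (a function on $C_n$), and interpolate the $L^{1+e^{-2t}}$ norm between $L^1$ and $L^2$ via Hölder: writing $1+e^{-2t}$ with the interpolation weight $\theta=\theta(t)$ one gets $\|D_iP_tf\|_2\le\|D_if\|_1^{1-\theta}\|D_if\|_2^{\theta}$ with $1-\theta$ comparable to a positive multiple of $e^{-2t}$ near $t=0$ (and $\theta\le 1$). Hence $\|D_iP_tf\|_2^2\le \|D_if\|_2^2\big(\|D_if\|_1/\|D_if\|_2\big)^{2(1-\theta(t))}$. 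Splitting $\int_0^\infty=\int_0^{t_0}+\int_{t_0}^\infty$ at a well-chosen cutoff $t_0\asymp\log\big(\|D_if\|_2/\|D_if\|_1\big)$: on $[t_0,\infty)$ one uses the crude bound $\|D_iP_tf\|_2^2\le e^{-2t}\|D_if\|_2^2$ (from the spectral gap, since $D_if$ has no constant term up to the obvious sign) which integrates to $\tfrac12 e^{-2t_0}\|D_if\|_2^2\asymp \|D_if\|_2^2\cdot(\|D_if\|_1/\|D_if\|_2)^2$, and this is $\le \|D_if\|_2^2/(1+\log(\|D_if\|_2/\|D_if\|_1))$ provided $\|D_if\|_2/\|D_if\|_1$ is not too close to $1$; on $[0,t_0]$ the hypercontractive bound gives an integrand $\le \|D_if\|_2^2\,e^{-cs}$ in the rescaled variable, yielding a contribution $\lesssim \|D_if\|_2^2/(1+t_0)$. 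Summing over $i$ produces exactly the right-hand side of \eqref{eq.talagrand1}. One should also dispose separately of the easy regime where $\|D_if\|_2/\|D_if\|_1=O(1)$ for every $i$ (equivalently the logarithmic factors are bounded), where the inequality reduces to the ordinary Poincaré inequality ${\rm Var}(f)\le \sum_i\|D_if\|_2^2$, true on the cube.

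The main obstacle I anticipate is not any single estimate but the bookkeeping of the two-sided control of the interpolation exponent: one needs $1-\theta(t)\ge c\,e^{-2t}$ with an explicit $c$ for $t$ in the relevant range so that the exponent $2(1-\theta(t))$ times $\log(\|D_if\|_2/\|D_if\|_1)$ actually beats the linear-in-$t$ growth, while simultaneously $\theta(t)\le 1$ to keep the $L^2$ factor under control; getting the cutoff $t_0$ and the constant $c$ consistent so that both pieces land below $C\|D_if\|_2^2/(1+\log(\|D_if\|_2/\|D_if\|_1))$ is where care is required, especially tracking that the bound degrades gracefully as $\|D_if\|_1\to\|D_if\|_2$ (i.e. as the log factor $\to 0$), which is exactly the situation the $1+\log(\cdot)$ in the denominator is designed to handle. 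A secondary subtlety is that $D_if$ is not mean-zero in general, but $\|D_if\|_1=I_i(f)$ small forces its mean to be small too, or one simply works with $D_if$ directly since the spectral-gap bound $\|P_t(D_if)\|_2^2\le \|D_if\|_2^2$ already suffices for the tail integral without needing mean-zero.
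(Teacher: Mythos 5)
The paper does not actually give its own proof of Theorem~\ref{thm.talagrand1}: it states the result and cites Talagrand \cite{TalL1L2}. What the paper \emph{does} carry out, in Section~\ref{three}, is the semigroup--hypercontractivity argument for the second--order version, Theorem~\ref{thm.talagrand.ordre.superieur.cube.discret.ordre2}, and your proposal follows exactly the same template at first order: variance representation along the Bonami--Beckner semigroup, commutation $D_iQ_t=Q_tD_i$, hypercontractivity $\|Q_t g\|_2\le\|g\|_{1+e^{-2t}}$, and H\"older interpolation of the $L^{1+e^{-2t}}$ norm between $L^1$ and $L^2$. So in spirit this is the route the paper endorses (and the one it attributes to \cite{CoLed}).

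Two substantive remarks. First, there is a sign slip in the interpolation exponent: with $\|Q_t g\|_2\le\|g\|_1^{\theta_1(t)}\|g\|_2^{1-\theta_1(t)}$, the exponent on the $L^1$ norm is $\theta_1(t)=\frac{1-e^{-2t}}{1+e^{-2t}}=\tanh t$, which is comparable to $t$ (not to $e^{-2t}$) as $t\to 0$; your phrase ``$1-\theta$ comparable to a positive multiple of $e^{-2t}$ near $t=0$'' has it backwards, and if taken literally would leave $\alpha^{2(1-\theta)}\approx\alpha^2$ essentially constant, ruining the integrability argument. Your subsequent estimates ($e^{-cs}$ decay on $[0,t_0]$, hence a $1/(1+t_0)$ contribution) quietly use the correct asymptotic $\tanh t\asymp t$, so the slip is cosmetic rather than fatal, but it should be fixed. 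Second, you worry that $D_if$ may not be mean--zero; in fact $\int_{C_n}D_if\,d\mu^n=0$ always, by invariance of $\mu^n$ under $x\mapsto\tau_ix$ (this is precisely equation~\eqref{eq.derivative.center} in the paper), so the spectral--gap bound $\|Q_t(D_if)\|_2^2\le e^{-2t}\|D_if\|_2^2$ on the tail is always available and no separate discussion is needed.

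On presentation: the paper avoids your data--dependent cutoff $t_0$ altogether. It first writes $Q_{2s}=Q_s\circ Q_s$ and applies the spectral gap to one factor, producing an explicit extra factor $e^{-2s}$ in the integrand; after that, the change of variable $v=1+e^{-2s}$ absorbs $e^{-2s}\,ds$ into $dv$, turns the time integral into a finite integral $\int_1^2$ against $\|D_if\|_v^2$, and the logarithmic factor emerges from a single elementary estimate $\int_0^1 u\,e^{-\frac{2u}{2-u}\log(1/\alpha)}\,du\lesssim(1+\log(1/\alpha))^{-2}$ (that is the second--order analogue; at first order one gets the single power). This removes the case analysis entirely --- in particular your separate treatment of the regime $\|D_if\|_2/\|D_if\|_1=O(1)$, which is both awkward (the bound must hold coordinate by coordinate, so a global dichotomy over all $i$ is not the right statement) and unnecessary, since $\alpha^2\le(1+\log(1/\alpha))^{-1}$ holds for every $\alpha\in(0,1]$. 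Your approach is valid and is the more classical way of writing the proof, but the halving trick the paper uses is tidier and generalizes more mechanically to the higher--order iterations that are the paper's actual goal.
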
 

\begin{rem}
Talagrand inequality improves, by a logarithmic factor, upon the classical Poincar\'e inequality (up to numerical constant) :

\begin{equation}\label{eq.poincare}
{\rm Var}_{\mu^n}(f)\leq \frac{1}{4}\sum_{i=1}^n\|D_if\|_2^2.
\end{equation}

\noindent As mentioned before, \eqref{eq.talagrand1} can be used to provide an alternative proof of Theorem \ref{thm.kkl}. Indeed, consider $f\,:\,C_n\to\{0,1\}$ and recall that, for any $p\geq 1$, $\|D_if\|_p^p=I_i(f)$. Then, to deduce \eqref{eq.kkl} from \eqref{eq.talagrand1}, assume that $I_i(f)\leq \big(\frac{{\rm Var}_{\mu^n}(f)}{n}\big)^{1/2}$ for any $i\in\{1,\ldots,n\}$, since if not the results holds. Then, according to \eqref{eq.talagrand1}, there exists $i\in~\{1,\ldots,n\}$ such that 

\[
\frac{{\rm Var}_{\mu^n}(f)}{Cn}\leq \frac{I_i(f)}{1+\log \bigg(\frac{1}{\sqrt{ I_i(f)}}\bigg)}\leq \frac{4I_i(f)}{4+\log \bigg(\frac{n}{{\rm Var}_{\mu^n}(f)}\bigg)}
\]

\noindent which easily leads to \eqref{eq.kkl}.
\end{rem}

The aim of this note is to develop an interpolation method by semigroups together with hypercontractive arguments to reach Talagrand inequality at order two. That is to say : the new inequalities will be similar to \eqref{eq.talagrand1} with derivatives of order two instead. The following Theorem is the main result of this note.

\begin{thm}\label{thm.talagrand.ordre.superieur.cube.discret.ordre2}
Let $0<s_0<\frac{1}{128}$ be fixed. For any Boolean function $f\,:\,C_n\to~\{0,1\}$ and any $n\geq 1$, the following holds

\begin{eqnarray}\label{eq.talagrand.ordre.superieur.cube.discret.ordre2}
{\rm Var}_{\mu^n}(f)\leq C\bigg(\sum_{i=1}^n\|D_if\|_{1+e^{-2s_0}}^2+\sum_{\underset{i\neq j}{i,j=1}}^n\frac{\|D_{ij}f\|_2^2}{\bigg[1+\log\bigg(\frac{\|D_{ij}f\|_2}{\|D_{ij}f\|_1}\bigg)\bigg]^2}\bigg)
\end{eqnarray}

\noindent where $D_{ij}=D_i\circ D_j$ for any $i,j\in\{1,\ldots,n\}$ and $C>0$ is a numerical constant. 
\end{thm}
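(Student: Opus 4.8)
The plan is to extract a low-degree piece of $f$ and then run the semigroup interpolation behind Theorem~\ref{thm.talagrand1} a second time. Let $(P_t)_{t\ge0}$ be the heat semigroup on $C_n$, normalised by $P_t\chi_S=e^{-t|S|}\chi_S$ where $\chi_S(x)=\prod_{i\in S}x_i$, so that $D_iP_t=P_tD_i$ and hypercontractivity reads $\|P_tg\|_2\le\|g\|_{1+e^{-2t}}$. Writing $\hat f$ for the Walsh--Fourier coefficients of $f$, one has the exact identity
\begin{equation*}
{\rm Var}_{\mu^n}(f)=\sum_{j=1}^n\sum_{\emptyset\ne S\ni j}\frac{\hat f(S)^2}{|S|}=\sum_{j=1}^n\hat f(\{j\})^2+\sum_{j=1}^n\sum_{|S|\ge2,\ S\ni j}\frac{\hat f(S)^2}{|S|}.
\end{equation*}
The first (degree-one) sum will be absorbed by the first term of \eqref{eq.talagrand.ordre.superieur.cube.discret.ordre2}, and the second (degree-$\ge2$) sum by the second term.

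For the degree-one sum, the elementary identity $\hat f(\{j\})=-\tfrac12\,\E[x_jD_jf]$ gives $|\hat f(\{j\})|\le\tfrac12\|D_jf\|_1=\tfrac12 I_j(f)$; since $I_j(f)\le1$, $1+e^{-2s_0}\le2$, and $\|D_jf\|_p^p=I_j(f)$ for every $p\ge1$,
\begin{equation*}
\hat f(\{j\})^2\le\tfrac14 I_j(f)^2\le\tfrac14 I_j(f)^{2/(1+e^{-2s_0})}=\tfrac14\,\|D_jf\|_{1+e^{-2s_0}}^2,
\end{equation*}
so $\sum_j\hat f(\{j\})^2\le\tfrac14\sum_j\|D_jf\|_{1+e^{-2s_0}}^2$; only $s_0>0$ is used here.

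For the degree-$\ge2$ sum, set $h_j:=D_jf+2\hat f(\{j\})\chi_{\{j\}}$, the projection of $D_jf$ onto the Fourier levels $\ge2$. It satisfies $\|h_j\|_\infty\le2$, has spectrum contained in degrees $\ge2$, verifies $D_ih_j=D_iD_jf=D_{ij}f$ for all $i\ne j$ (because $\chi_{\{j\}}$ does not depend on $x_i$), and $\sum_{|S|\ge2,\ S\ni j}\hat f(S)^2/|S|=\tfrac12\int_0^\infty\|P_th_j\|_2^2\,dt$. I would now re-run the interpolation scheme of Theorem~\ref{thm.talagrand1} with $h_j$ in the role of $f$: split the time integral at $s_0$, estimate $\|P_th_j\|_2^2$ for $t\ge s_0$ by the degree-$\ge2$ decay $\|P_th_j\|_2^2\le e^{-4t}\|h_j\|_2^2$ and for $t\le s_0$ hypercontractively, re-expand the small-time part via ${\rm Var}(P_th_j)=\|P_th_j\|_2^2$ and a further use of the semigroup representation, which brings in the second derivatives $D_ih_j=D_{ij}f$ ($i\ne j$), and integrate once more. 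The decisive input is that, $h_j$ and each $D_{ij}f$ being \emph{bounded} (by $2$), hypercontractivity forces their low-degree truncations to be anti-concentrated, so that $1+\log(\|D_{ij}f\|_2/\|D_{ij}f\|_1)$ is, up to a universal factor, comparable to the degree on which $D_{ij}f$ is essentially supported; feeding this comparison into a dyadic summation over Fourier levels upgrades the single logarithm of Theorem~\ref{thm.talagrand1} to the squared logarithm of \eqref{eq.talagrand.ordre.superieur.cube.discret.ordre2}. This should give $\tfrac12\int_0^\infty\|P_th_j\|_2^2\,dt\le C\sum_{i\ne j}\|D_{ij}f\|_2^2\,/\,\big(1+\log(\|D_{ij}f\|_2/\|D_{ij}f\|_1)\big)^2$; summing over $j$ and combining with the degree-one estimate yields \eqref{eq.talagrand.ordre.superieur.cube.discret.ordre2}.

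The main obstacle is exactly this last step. Naively iterating Theorem~\ref{thm.talagrand1} — applying it to $f$ and then to each $D_jf$ — only yields the product $[1+\log(\|D_jf\|_2/\|D_jf\|_1)]\,[1+\log(\|D_{ij}f\|_2/\|D_{ij}f\|_1)]$ in the denominator, and the first factor can be genuinely smaller than the second, so this misses the claimed estimate; forcing both logarithmic gains down to the scale of the second derivative is precisely what the semigroup argument together with the degree/anti-concentration bookkeeping achieves, and is where the numerical slack in the hypothesis $s_0<1/128$ is consumed. Two ancillary points require care: the diagonal terms $i=j$, where $D_{ii}=-2D_i$ makes the estimate look circular and has to be removed by absorption (using that $h_j$ carries no Fourier mass in degrees $\le1$) or re-routed into the first sum; and a possible residual of coordinates $j$ of extremely small influence, for which the crude estimate $\int_0^\infty\|P_th_j\|_2^2\,dt\le\tfrac14\|h_j\|_2^2$ has to be compared against the second sum directly rather than through the hypercontractive route.
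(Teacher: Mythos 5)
Your overall strategy is genuinely different from the paper's: you split the Walsh--Fourier expansion of ${\rm Var}_{\mu^n}(f)$ into the degree-one part and the degree-$\geq 2$ part and try to bound each separately, whereas the paper never decomposes in Fourier level. Instead, it starts from the semigroup representation ${\rm Var}_{\mu^n}(f)=2\int_0^\infty\sum_i\|Q_t(D_if)\|_2^2\,dt$, applies the fundamental theorem of calculus once more to $K(s)=\sum_i\|Q_s(D_if)\|_2^2$ (producing the second derivatives $D_{ij}f$ together with an \emph{extra time weight} $1-e^{-2u}$ after Fubini), splits the resulting double sum into $i=j$ and $i\neq j$, absorbs the small-time diagonal contribution back into the variance by choosing $64s_0\leq 1/2$, estimates the large-time diagonal part by hypercontractivity at time $s_0$, and finally treats the off-diagonal part by hypercontractivity, H\"older interpolation, and the explicit estimate $\int_0^1 u\,e^{-\frac{2u}{2-u}\log(1/\alpha)}\,du\leq C/[1+\log(1/\alpha)]^2$. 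The extra weight $u$ coming from the second FTC step is precisely what promotes the single logarithm of Theorem \ref{thm.talagrand1} to the squared logarithm.

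Your degree-one estimate is correct and clean: $\hat f(\{j\})=-\tfrac12\E[x_jD_jf]$, hence $\hat f(\{j\})^2\leq\tfrac14 I_j(f)^2\leq\tfrac14\|D_jf\|_{1+e^{-2s_0}}^2$. But the degree-$\geq 2$ estimate, which carries the entire difficulty of the theorem, is not actually proved. You identify the right object $h_j$ (the projection of $D_jf$ to degrees $\geq 2$) and the right departure point $\sum_{|S|\geq 2,\,S\ni j}\hat f(S)^2/|S|=\tfrac12\int_0^\infty\|P_th_j\|_2^2\,dt$, and you recognise, correctly, that naively iterating Theorem \ref{thm.talagrand1} gives a product of two single logarithms rather than a square of one. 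But the proposed fix --- ``hypercontractivity forces anti-concentration so that $1+\log(\|D_{ij}f\|_2/\|D_{ij}f\|_1)$ is comparable to the supporting degree, then sum dyadically over Fourier levels'' --- is not an argument: no quantitative comparison between the $L^1/L^2$ ratio of $D_{ij}f$ and a Fourier level of $h_j$ is stated, no dyadic decomposition is written down, and no inequality is produced. Your own phrasing (``This should give...'', ``The main obstacle is exactly this last step'') flags the gap. To close it one would need to re-express $\int_0^\infty\|P_th_j\|_2^2\,dt$ via a further semigroup representation (which does yield a weight $v$ in front of $\sum_i\|P_v(D_ih_j)\|_2^2$), split off the $i=j$ term, absorb its small-time contribution, and treat the $i\neq j$ part by hypercontractivity and H\"older --- at which point you are, in effect, redoing the paper's interpolation computation, rather than having replaced it by a dyadic/anti-concentration argument. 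As submitted, the proof is incomplete; the second half needs to be carried out, and the role of the hypothesis $s_0<1/128$ (which in the paper's proof enforces the absorption condition $64s_0\leq 1/2$) needs to be located explicitly in your scheme, since your degree-one bound uses only $s_0>0$.
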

\begin{rem}
We want to highlight the fact that $s_0$ et $C$ are independent of $f$ and $n$.
\end{rem}

As an application of this result, we propose a theorem in the spirit of Theorem \ref{thm.kkl} with the influence $I_{(i,j)}(f)$ of a function $f$ for some coordinates $(i,j)\in\{1,\ldots,n\}^2$. This notion will be precisely defined in the sequel as an extension of the standard notion of influence \eqref{eq.influence}.

\begin{cor}\label{cor.kkl.ordre2}
Let $f\,:\, C_n\to\{0,1\}$ be a Boolean function. Then,  the following alternative holds : either there exists $i\in\{1,\ldots,n\}$ such that 

\[
I_i(f)\geq c\bigg({\rm Var}_{\mu^n}(f)\bigg)^{1/(1+\eta)}\bigg(\frac{1}{n}\bigg)^{1/(1+\eta)} \quad \text{with} \quad 0<\eta<1
\]

\noindent or there exists $(i,j)\in\{1,\ldots,n\}^2$ (with $i\neq j$) such that 

\[
I_{(i,j)}(f)\geq c{\rm Var}_{\mu^n}(f)\bigg(\frac{\log n}{n}\bigg)^2.
\]
In each case, $c>0$ and $\eta$ are absolute constants independent of $f$ and $n$.
\end{cor}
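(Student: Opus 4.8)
The plan is to deduce Corollary \ref{cor.kkl.ordre2} from Theorem \ref{thm.talagrand.ordre.superieur.cube.discret.ordre2} by the same dichotomy argument that recovers Kahn--Kalai--Linial from Talagrand's inequality, now applied to the two-sum right-hand side of \eqref{eq.talagrand.ordre.superieur.cube.discret.ordre2}. Fix $s_0$ as in the theorem, write $V={\rm Var}_{\mu^n}(f)$, and recall the Boolean identities $\|D_if\|_p^p=I_i(f)$ and (by the same reasoning applied to the $\{-1,0,1\}$-valued function $D_jf$, or directly) a corresponding identity $\|D_{ij}f\|_p^p=I_{(i,j)}(f)$ for the suitably defined influence of a couple; the first step is to record these so that both sums in \eqref{eq.talagrand.ordre.superieur.cube.discret.ordre2} become expressions in $I_i(f)$ and $I_{(i,j)}(f)$ only. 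Then $\|D_if\|_{1+e^{-2s_0}}^2=I_i(f)^{2/(1+e^{-2s_0})}$, and setting $1+\eta=1/(\text{something close to }1)$ — precisely $\tfrac{2}{1+e^{-2s_0}}=\tfrac{1}{1+\eta}$ would be wrong since that exponent exceeds $1$; instead one wants $I_i(f)^{1/(1+\eta)}$ with $0<\eta<1$, so I would bound $I_i(f)^{2/(1+e^{-2s_0})}\le I_i(f)^{1/(1+\eta)}$ using $I_i(f)\le 1$ once $\tfrac{2}{1+e^{-2s_0}}\ge\tfrac{1}{1+\eta}$, i.e. choosing $\eta$ with $1+\eta\le\tfrac{1+e^{-2s_0}}{2}$ is impossible too. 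The clean route: note $2/(1+e^{-2s_0})>1$, so this exponent is \emph{larger} than $1$; hence I should instead negate and assume $I_i(f)\le n^{-1/(1+\eta)}$ for a value $1+\eta:=2/(1+e^{-2s_0})\in(1,2)$, which does lie in the claimed range since $0<s_0<1/128$ forces $e^{-2s_0}$ near $1$ and thus $\eta$ small and positive.

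Concretely, the argument runs as follows. Suppose the first alternative fails: for every $i$, $I_i(f)< c\,V^{1/(1+\eta)}n^{-1/(1+\eta)}$ with $\eta$ as above and $c$ a small constant to be fixed. Then each term of the first sum satisfies $\|D_if\|_{1+e^{-2s_0}}^2=I_i(f)^{1/(1+\eta)}$, and summing over $i$ gives $\sum_i I_i(f)^{1/(1+\eta)}< n\cdot c^{1/(1+\eta)}V^{1/(1+\eta)^2}n^{-1/(1+\eta)^2}$; choosing $c$ small enough this is at most $V/(2C)$, so Theorem \ref{thm.talagrand.ordre.superieur.cube.discret.ordre2} forces
\[
\frac{V}{2C}\le \sum_{\underset{i\neq j}{i,j=1}}^n\frac{\|D_{ij}f\|_2^2}{\bigl[1+\log(\|D_{ij}f\|_2/\|D_{ij}f\|_1)\bigr]^2}.
\]
Now I mirror the KKL-from-Talagrand step at second order: assume also, for contradiction, that $I_{(i,j)}(f)< \bigl(V/n^2\bigr)^{1/2}$ for all $i\neq j$ (else the second alternative already holds, after absorbing constants). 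Using $\|D_{ij}f\|_2^2=I_{(i,j)}(f)$, $\|D_{ij}f\|_1=I_{(i,j)}(f)$, hence $\|D_{ij}f\|_2/\|D_{ij}f\|_1=I_{(i,j)}(f)^{-1/2}$, each summand is $I_{(i,j)}(f)/[1+\tfrac12\log(1/I_{(i,j)}(f))]^2$, which under the assumed bound is $\le 4I_{(i,j)}(f)/[2+\tfrac12\log(n^2/V)]^2$. Summing the $n(n-1)$ terms and using $\sum_{i\neq j}I_{(i,j)}(f)\le \bigl(\max_{i\neq j}I_{(i,j)}(f)\bigr)\cdot n^2$ — or more carefully keeping the $\max$ — yields $V\le C' n^2 \max_{i\neq j}I_{(i,j)}(f)/(\log n)^2$, whence some couple has $I_{(i,j)}(f)\ge c\,V(\log n/n)^2$, contradicting the assumption and establishing the second alternative.

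The main obstacle I anticipate is purely bookkeeping: pinning down the definition of $I_{(i,j)}(f)$ so that the $L^p$-norm identity $\|D_{ij}f\|_p^p=I_{(i,j)}(f)$ holds exactly (the paper promises this definition in section \ref{two}, so I would simply cite it), and then verifying that the exponent $1+\eta=2/(1+e^{-2s_0})$ genuinely lies in $(1,2)$ with $\eta$ independent of $f,n$ — this is immediate from $0<s_0<1/128$ but must be stated. A secondary subtlety is the order of the two "assume the alternative fails" reductions and making sure the small constants $c$ chosen in each step are compatible (choose them in sequence: first fix $\eta$ from $s_0$, then fix $c$ small enough to absorb the $1/(2C)$ factor in the first sum, then the constant in the second alternative is whatever falls out of the logarithmic computation). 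Everything else — the inequality $[1+\tfrac12\log(1/t)]^{-2}\le 4[2+\tfrac12\log(1/t_0)]^{-2}$ for $t\le t_0$, and $t^{1/(1+\eta)}\le t$ vs. the reverse for $t\le 1$ — is routine one-variable calculus handled exactly as in the Remark following Theorem \ref{thm.talagrand1}.
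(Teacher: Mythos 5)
Your high-level strategy is the same as the paper's: apply Theorem \ref{thm.talagrand.ordre.superieur.cube.discret.ordre2}, translate the two sums into influences, identify $1+\eta=2/(1+e^{-2s_0})$, and case split (first sum dominates versus second sum dominates, which you phrase as negating the first alternative). The second half — the KKL-from-Talagrand argument applied to the double sum — is also exactly what the paper does, modulo the fact that $I_{(i,j)}(f)=\tfrac12\|D_{ij}f\|_1$ and $\|D_{ij}f\|_2^2$ is only comparable to $\|D_{ij}f\|_1$ via \eqref{eq.comparaison}, not equal; that only shifts absolute constants and your acknowledgment of the need to "absorb constants" covers it.

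There is, however, one genuine arithmetic error that breaks your first reduction as written. From $\|D_if\|_p^p=I_i(f)$ you get $\|D_if\|_{1+e^{-2s_0}}^2=I_i(f)^{2/(1+e^{-2s_0})}=I_i(f)^{1+\eta}$, \emph{not} $I_i(f)^{1/(1+\eta)}$. You even identify $1+\eta:=2/(1+e^{-2s_0})$ correctly in the preceding paragraph, but then substitute the reciprocal. With the wrong exponent your summing step gives
\[
\sum_i I_i(f)^{1/(1+\eta)}<n\cdot c^{1/(1+\eta)}V^{1/(1+\eta)^2}n^{-1/(1+\eta)^2}=c^{1/(1+\eta)}\,V^{1/(1+\eta)^2}\,n^{1-1/(1+\eta)^2},
\]
and since $1-1/(1+\eta)^2>0$ and $V\le 1$, this quantity is \emph{not} controlled by $V/(2C)$ no matter how small $c$ is — the $n$-power diverges. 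With the correct exponent $1+\eta$ the computation closes immediately: $\sum_i I_i(f)^{1+\eta}<n\cdot c^{1+\eta}\,V\,n^{-1}=c^{1+\eta}V$, which is at most $V/(2C)$ for $c$ small. So the fix is purely the substitution $I_i(f)^{1/(1+\eta)}\rightsquigarrow I_i(f)^{1+\eta}$, but as currently stated the step does not go through, and the earlier paragraph's hesitation about whether the exponent "exceeds $1$" suggests this was a real point of confusion rather than a typo — it is worth being explicit that $1+\eta$ appears \emph{inside} the sum and its reciprocal $1/(1+\eta)$ only appears when you take the root at the end to isolate $I_i(f)$.
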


\noindent The rest of this paper is organized as follow : section \ref{two} provides semigroup tools and the framework of Boolean analysis needed to prove Theorem \ref{thm.talagrand.ordre.superieur.cube.discret.ordre2}.  Section \ref{three} is devoted to the proof of Theorem \ref{thm.talagrand.ordre.superieur.cube.discret.ordre2} and Corollary \ref{cor.kkl.ordre2} ; also, some remarks about extensions at higher orders will be given. In section \ref{four}, we present how can Theorem \ref{thm.talagrand.ordre.superieur.cube.discret.ordre2} extend in a Gaussian context. Finally, in the last section, we present some open questions related to our work and related to  some recent results in Concentration of Measure Theory (the so-called concentration at higher order for instance).

\section{Framework and tools}\label{two} 

\subsection{Some facts about semigroups}
The discrete cube $C_n=\{-1,1\}^n$ is an interesting example for which semigroups interpolation methods can be used to reach functional inequalities. Let us briefly collect some basic properties of this space equipped with the product measure $\mu^n$, where $\mu=\frac{1}{2}\delta_{-1}+\frac{1}{2}\delta_1$. \\

The classical semigroup associated to $(C_n,\mu^n)$ (cf. \cite{Odo,GarStei,CoLed}) is referred to the Bonami-Beckner semigroup $(Q_t)_{t\geq 0}$. As it is classical, $\mu^n$ is its invariant and reversible measure. Recall that the discrete Laplacian is given by  
\[
L=\frac{1}{2}\sum_{i=1}^n D_i
\]
with $D_i$ the (discrete) partial derivative along the $i$-th coordinate. This differential operator can be used to define a Dirichlet form on $C_n$ : for any functions $f,g\,:\,C_n\to\R$, we set

\begin{equation}\label{eq.ipp}
\mathcal{E}_{\mu^n}(f,g)=\int_{C_n}f(-Lg)d\mu^n=4\int_{C_n}\nabla f\cdot \nabla gd\mu^n,
\end{equation}

\noindent where $\nabla h=(D_1h,\ldots, D_nh)$ is the discrete gradient of any function $h\,:\,C_n\to\R$. The operator $L$ is also used to define the so-called Bonami-Beckner semigroup by the formula $Q_t=e^{tL}$ with $t\geq 0$. Now, let us recall some important properties of $(Q_t)_{t\geq 0}$.
 
 \begin{prop}
 \begin{enumerate}
\item The Bonami-Beckner semigroup admits an integral representation formula, for any $t\geq 0$, 
 
 \[
 Q_t(f)(x)=\int_{C_n}f(y)\prod_{i=1}^n(1+e^{-t}x_iy_i)d\mu^n(y)\quad \text{with}\quad x\in C_n.
 \]
 
\item  $(Q_t)_{t\geq 0}$ is Markovian and $\mu^n$ is its invariant and reversible measure. Namely, for any $t\geq 0$,  
 
 \[
Q_t(1)=1\quad \text{and}\quad  \int_{C_n}fQ_t(g)d\mu^n=\int_{C_n}gQ_t(f)d\mu^n
 \]
  for any functions $f,g\,:\,C_n\to\R$.   
  \end{enumerate}
  \end{prop}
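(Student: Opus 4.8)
The plan is to verify both items by direct computation, exploiting the product structure of $(C_n,\mu^n)$ and the fact that the Fourier--Walsh characters $\chi_S(x)=\prod_{i\in S}x_i$, for $S\subseteq\{1,\dots,n\}$, diagonalize $L$. First I would recall (or re-derive from $L=\frac12\sum_i D_i$ with $D_if(x)=f(\tau_ix)-f(x)$) that $D_i\chi_S=-2\chi_S$ if $i\in S$ and $D_i\chi_S=0$ otherwise, so that $L\chi_S=-|S|\chi_S$. Consequently $Q_t=e^{tL}$ acts diagonally by $Q_t\chi_S=e^{-t|S|}\chi_S$, and since $\{\chi_S\}_{S}$ is an orthonormal basis of $L^2(\mu^n)$, the operator $Q_t$ is well defined and self-adjoint on $L^2(\mu^n)$; I will use this Fourier picture as the common backbone for both parts.

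For item (1), the integral-representation formula, I would expand the kernel: $\prod_{i=1}^n(1+e^{-t}x_iy_i)=\sum_{S\subseteq\{1,\dots,n\}}e^{-t|S|}\chi_S(x)\chi_S(y)$, which is just the multilinear expansion of the product together with $\prod_{i\in S}x_iy_i=\chi_S(x)\chi_S(y)$. Then, writing $f=\sum_S\hat f(S)\chi_S$ and using the orthonormality relation $\int_{C_n}\chi_S(y)\chi_{S'}(y)\,d\mu^n(y)=\ind_{S=S'}$, one gets $\int_{C_n}f(y)\prod_{i=1}^n(1+e^{-t}x_iy_i)\,d\mu^n(y)=\sum_S e^{-t|S|}\hat f(S)\chi_S(x)=Q_tf(x)$, which is exactly the claimed formula. (Alternatively, and perhaps more in the spirit of an interpolation paper, one can check the formula directly: the right-hand side clearly equals $f$ at $t=0$, and differentiating in $t$ under the integral sign and matching it against $LQ_tf$ identifies the kernel as the heat kernel of $L$ by uniqueness of solutions to the linear ODE $\partial_t u=Lu$ on the finite-dimensional space of functions on $C_n$.)

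For item (2), the Markov and reversibility properties, $Q_t(1)=1$ is immediate since $1=\chi_\emptyset$ with $|\emptyset|=0$, so $Q_t\chi_\emptyset=\chi_\emptyset$; equivalently, in the integral form, $\int_{C_n}\prod_{i=1}^n(1+e^{-t}x_iy_i)\,d\mu^n(y)=1$ because every term with $S\neq\emptyset$ integrates to zero. Reversibility $\int_{C_n}fQ_t(g)\,d\mu^n=\int_{C_n}gQ_t(f)\,d\mu^n$ follows from the symmetry of the kernel $(x,y)\mapsto\prod_i(1+e^{-t}x_iy_i)$ under $x\leftrightarrow y$ and Fubini, or in Fourier terms from $\int_{C_n}fQ_tg\,d\mu^n=\sum_S e^{-t|S|}\hat f(S)\hat g(S)$, which is manifestly symmetric in $f$ and $g$; the same identity with $g=1$, i.e.\ $S=\emptyset$, gives invariance $\int Q_tf\,d\mu^n=\hat f(\emptyset)=\int f\,d\mu^n$. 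Positivity preservation ($f\geq0\Rightarrow Q_tf\geq0$) follows from the integral representation once one notes $1+e^{-t}x_iy_i\geq 1-e^{-t}\geq0$ for $t\geq0$ and $x_i,y_i\in\{-1,1\}$, so the kernel is nonnegative; combined with $Q_t1=1$ this is the Markov property. There is no serious obstacle here: the only point requiring a little care is the kernel expansion in item (1) and the bookkeeping $\prod_{i\in S}x_iy_i=\chi_S(x)\chi_S(y)$, after which everything reduces to orthogonality of characters.
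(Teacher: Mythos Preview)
Your proof is correct. Note, however, that the paper does not actually prove this proposition: it is stated as background material in Section~\ref{two} with references to \cite{Odo,GarStei,CoLed}, and no argument is given. Your Fourier--Walsh verification (diagonalizing $L$ on the characters $\chi_S$, expanding the kernel as $\sum_S e^{-t|S|}\chi_S(x)\chi_S(y)$, and reading off Markovianity and reversibility from the kernel's nonnegativity and symmetry) is the standard textbook route and is entirely sound.
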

  
  \begin{rem}
 With this integral representation in hand, it is easily seen that the following commutation formula holds
  \begin{equation}\label{eq.communtation.bonami}
  Q_tD_i=D_iQ_t
  \end{equation}
  for any $i\in\{1,\ldots,n\}$ and $t\geq 0$.
  \end{rem}
  
It has been proven (cf. \cite{Odo, Gro,DiaSal})  that $(Q_t)_{t\geq 0}$ satisfies an hypercontractive property. That is to say
 
\begin{thm}\label{thm.hypercontractivite.bonami.beckner}(Bonami-Beckner)
The semigroup $(Q_t)_{t\geq 0}$ is hypercontractive. Namely, for any $f\,:\,C_n\to\R$, every $t\geq 0$ and every $q\geq 1$

\begin{equation}\label{eq.hypercontractivite.bonami.beckner}
\|Q_t(f)\|_{q}\leq \|f\|_{p},
\end{equation}
\noindent with $p=p(t)=1+(q-1)e^{-2t}$.
\end{thm}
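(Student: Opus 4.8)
The plan is to prove \eqref{eq.hypercontractivite.bonami.beckner} first in dimension one and then to obtain the general case by tensorization over the product structure $\mu^n=\mu^{\otimes n}$. Write $\rho=e^{-t}\in(0,1]$; the relation $p=1+(q-1)e^{-2t}$ then reads $\rho^2=\frac{p-1}{q-1}$, and $1\le p\le q$. Two cases are trivial and may be set aside: $q=1$ forces $p=1$ and \eqref{eq.hypercontractivite.bonami.beckner} is the $L^1$-contractivity of the Markov operator $Q_t$ (which follows from $Q_t(1)=1$ and the positivity of the kernel $\prod_i(1+e^{-t}x_iy_i)$ in the integral representation above, giving $|Q_tf|\le Q_t|f|$ and $\int Q_t|f|\,d\mu^n=\int|f|\,d\mu^n$), while $t=0$ gives $p=q$ and equality. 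So assume $1<p<q$ and $0<\rho<1$.

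First I would treat $n=1$, the \emph{two-point inequality}. Any $f:C_1\to\R$ writes $f(\omega)=a+b\omega$, and the integral representation of $Q_t$ recalled above gives $Q_tf(\omega)=a+\rho b\,\omega$ (since $\int\omega\,d\mu=0$ and $\int\omega^2\,d\mu=1$). Hence \eqref{eq.hypercontractivite.bonami.beckner} for $n=1$ is the scalar inequality
\[
\Big(\tfrac12|a+\rho b|^q+\tfrac12|a-\rho b|^q\Big)^{1/q}\ \le\ \Big(\tfrac12|a+b|^p+\tfrac12|a-b|^p\Big)^{1/p},\qquad \rho=\sqrt{\tfrac{p-1}{q-1}}.
\]
Both sides are continuous, positively homogeneous of degree one in $(a,b)$, and invariant under $b\mapsto-b$ and under $a\mapsto-a$ (using that $-\omega$ has the same law as $\omega$), which reduces the problem to $a=1$, $b\ge0$. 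For $0\le b\le1$ one has $\rho b\le1$, so both sides expand in even power series via $\tfrac12\big((1+x)^r+(1-x)^r\big)=\sum_{k\ge0}\binom r{2k}x^{2k}$, and the inequality becomes $\big(\sum_{k\ge0}\binom q{2k}(\rho b)^{2k}\big)^{p/q}\le\sum_{k\ge0}\binom p{2k}b^{2k}$; I would verify this by comparing the coefficients of $b^{2k}$, the point being that the constant terms coincide, the coefficients of $b^2$ coincide \emph{exactly} because $\frac{p}{q}\binom{q}{2}\rho^2=\binom{p}{2}$ thanks to $\rho^2(q-1)=p-1$, and the remaining terms go the right way. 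The range $b>1$ is handled by the same type of estimate after an appropriate renormalisation. This is Bonami's lemma.

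Next I would tensorize by induction on $n$, the case $n=1$ being the previous step. The integral representation shows $Q_t=Q_t^{(1)}\otimes\cdots\otimes Q_t^{(n)}$, where $Q_t^{(i)}$ is the one-dimensional Bonami--Beckner operator in the $i$-th variable. Writing $x=(x_1,x')\in C_1\times C_{n-1}$ and $g=\big(\bigotimes_{i\ge2}Q_t^{(i)}\big)f$, so that $Q_tf=Q_t^{(1)}g$, the $n=1$ case applied to the slice $g(\cdot,x')$ gives $\int_{C_1}|Q_tf(x_1,x')|^q\,d\mu(x_1)\le\big(\int_{C_1}|g(x_1,x')|^p\,d\mu(x_1)\big)^{q/p}$. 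Integrating in $x'$ and applying Minkowski's integral inequality with exponent $q/p\ge1$ to pull the $x_1$-integration outside the $L^{q/p}(\mu^{n-1})$-norm yields
\[
\|Q_tf\|_{L^q(\mu^n)}^{p}\ \le\ \int_{C_1}\Big(\int_{C_{n-1}}|g(x_1,x')|^q\,d\mu^{n-1}(x')\Big)^{p/q}\,d\mu(x_1).
\]
Since $g(x_1,\cdot)$ is the $(n-1)$-dimensional Bonami--Beckner semigroup at time $t$ applied to the slice $f(x_1,\cdot)$, the induction hypothesis bounds the inner norm by $\|f(x_1,\cdot)\|_{L^p(\mu^{n-1})}$, and Fubini then gives $\|Q_tf\|_{L^q(\mu^n)}^p\le\|f\|_{L^p(\mu^n)}^p$, which is \eqref{eq.hypercontractivite.bonami.beckner}.

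The main obstacle is the two-point lemma: although elementary, turning the coefficient comparison into a fully rigorous proof valid on the whole range $1\le p\le q$ — where $q$ need not be an even integer, so infinitely many series coefficients with non-uniform signs must be controlled, and the absolute values in the displayed inequality require a short case analysis — is the one genuinely delicate point; everything else (the integral representation, Minkowski, Fubini, the induction) is soft. An equally classical alternative, which relocates rather than removes this difficulty, is Gross's route: establish the two-point logarithmic Sobolev inequality $\mathrm{Ent}_\mu(g^2)\le C\,\mathcal E_\mu(g,g)$ with the sharp constant, tensorize it by sub-additivity of entropy into a logarithmic Sobolev inequality on $(C_n,\mu^n)$, and convert it into \eqref{eq.hypercontractivite.bonami.beckner} by differentiating $t\mapsto\|Q_tf\|_{p(t)}$ with $p(t)=1+(q-1)e^{-2t}$ — precisely the semigroup-interpolation mechanism used throughout this note.
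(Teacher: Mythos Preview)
The paper does not prove Theorem~\ref{thm.hypercontractivite.bonami.beckner}; it is quoted as a classical result with references to \cite{Odo,Gro,DiaSal}, and is used as a black box in the proof of Theorem~\ref{thm.talagrand.ordre.superieur.cube.discret.ordre2}. So there is no ``paper's own proof'' to compare with.

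Your outline is the standard Bonami route and is correct in structure: the two-point inequality for $Q_t(a+b\omega)=a+\rho b\,\omega$ with $\rho^2=(p-1)/(q-1)$, followed by tensorization via Minkowski's integral inequality and induction on $n$. The tensorization step as you wrote it is fully rigorous. The only place where your argument is genuinely incomplete is the two-point lemma itself, and you are candid about this: the coefficient comparison $\binom{p}{2k}\ge\frac{p}{q}\binom{q}{2k}\rho^{2k}$ (for general real $p,q$) and the treatment of $b>1$ are asserted rather than carried out. That is indeed the only hard part; filling it in requires either a careful sign analysis of generalized binomial coefficients or the alternative Gross argument you mention (two-point log-Sobolev, tensorization of entropy, then the $t\mapsto\|Q_tf\|_{p(t)}$ derivative), which is in fact the route behind the reference \cite{Gro}. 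Either completion is classical and well documented in \cite{Odo,DiaSal}.
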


For further purposes, let us recall that the Poincar\'e inequality \eqref{eq.poincare} is equivalent to the following inequality, for any function $f \,: \, C_n \to \R$, we have

\[
 {\rm Var}_{\mu^n}\big(Q_t(f)\big)\leq e^{-2t} {\rm Var}_{\mu^n} (f) \quad \text{for any}\quad t\geq 0.
\]

\noindent Equivalently, when $f$ is centered under $\mu^n$, it reads

\begin{equation}\label{eq.exponential.poincare}
\|Q_tf\|_2^2\leq e^{-2t}\|f\|_2^2\quad \text{for any}\quad t\geq 0
\end{equation}

\noindent since $\mu^n$ is the invariant measure of $(Q_t)_{t\geq 0}$.\\

In particular, during the proof of our main result, inequality \eqref{eq.exponential.poincare} will be used with $D_if$ and $D_{ij}f$ for any $i,j=1,\ldots,n$. Indeed, notice that 

\begin{equation}\label{eq.derivative.center}
\int_{C_n}f(x)d\mu^n=\int_{C_n}f(\tau_ix)d\mu^n
\end{equation}

\noindent therefore, $D_if$ and $D_{ij}f$, for any $i,j=1,\ldots,n$, are centered under the measure $\mu^n$.

\subsection{Influences}
For more details, general references on Boolean Analysis are \cite{Odo,GarStei}. In this section we introduce the notion of the influence of a couple of coordinates $(i,j)\in\{1,\ldots,n\}^2$ which extends the classical notion of influence \eqref{eq.influence}.   \\
\newline

\begin{defn}
For any Boolean function $f\,:\,C_n\to\{0,1\}$ and for any $(i,j)\in~\{1,\ldots,n\}^2$, the influence of the couple $(i,j)$ of the function $f$ is given by

\begin{equation}\label{eq.influence.double}
I_{(i,j)}(f)=\frac{1}{2}\|D_{ij}f\|_1
\end{equation}

\noindent where $D_{ij}=D_i\circ D_j$. 
\end{defn}

\begin{rem}
\begin{enumerate}
\item It is easily seen that, for any $(i,j)\in\{1,\ldots,n\}^2$,  
\[
D_{ij}f=f(x)-f(\tau_ix)-f(\tau_jx)+f(\tau_{ij}x)\quad \text{for any} \quad x\in C_n
\]
 where $\tau_{ij}=\tau_i\circ\tau_j$. In particular, when $i=j$, $D_{ii}=2D_i$. Therefore \eqref{eq.influence.double} is an extension of the notion of influence (in its alternative formulation in terms of $L^1(\mu^n)$ norm).\\
\item As in the classical case, it is possible to show that, for any $i\neq j$,  $\|D_{ij}f\|_1$ and $\|D_{ij}f\|_2^2$ are equivalent. Indeed, for any  $(i,j)\in\{1,\ldots,n\}^2$ with $(i\neq j)$, we have

\begin{equation}\label{eq.comparaison}
\|D_{ij}f\|_1\leq \|D_{ij}f\|_2^2\leq 2\|D_{ij}f\|_1.
\end{equation}

From a heuristic point of view, this can be explained as follow : for any Boolean function $f\,:\,C_n\to\{0,1\}$ and any $(i,j)\in\{1,\ldots,n\}^2,\, i\neq j$ we have $|D_{ij}f|\in\{0,1,2\}$ for any $x\in C_n$. Besides, since $f$ is Boolean, there exists $A\subset C_n$ such that $f=1_A$. Then, it is enough to study, for $p\in\{1,2\}$,
\[
\int_{C_n}\big|f(x)-f(\tau_ix)-f(\tau_jx)+f(\tau_{ij}x)\big|^pd\mu^n(x)\quad \text{for}\quad x\in C_n
 \]
along the partition of $C_n$ induced by the set $A$. That is to say, it is enough to cut the integral according to the family of sets 
 \begin{eqnarray*}
 &\{x&\in C_n\,;\, x\in A,\, \tau_i(x)\notin A,\, \tau_j(x)\notin A, \tau_{ij}(x)\notin A\},\\
   &\{x&\in C_n\,;\, x\in A,\, \tau_i(x)\in A,\, \tau_j(x)\notin A,\, \tau_{ij}(x)\notin A\},\\
   &\{x&\in C_n\,;\, x\notin A,\, \tau_i(x)\notin A,\, \tau_j(x)\notin A,\, \tau_{ij}(x)\notin A\},\\
   &\{x&\in C_n\,;\, x\in A,\, \tau_i(x)\notin A,\, \tau_j(x)\in A,\, \tau_{ij}(x)\in A\},\\
   &\vdots&\\
   &\{x&\in C_n\,;\, x\in A,\, \tau_i(x)\in A,\, \tau_j(x)\in A,\, \tau_{ij}(x)\in A\},
 \end{eqnarray*}
to prove this fact.
\end{enumerate}
\end{rem}

\section{Proof of Theorem \ref{thm.talagrand.ordre.superieur.cube.discret.ordre2}}\label{three}

The proof starts with the representation of the variance of $f$ along the Bonami-Beckner's semigroup $(Q_t)_{t\geq 0}$ (cf. \cite{CoLed,BobGoHou}) :

\begin{equation}\label{eq.representation.variance.cube}
{\rm Var}_{\mu^n}(f)=2\int_0^\infty \sum_{i=1}^n\int_{C_n}Q_t^2(D_if)d\mu^ndt.
\end{equation}

\noindent Then, set $2s=t$ and for any $i=1,\ldots,n$, use the fact that 

\[
\int_{C_n}Q_{2s}^2(D_if)d\mu^n=\|Q_s\circ Q_s(D_if)\|_2^2\leq e^{-2s}\|Q_s (D_if)\|_2^2
\]
where the last upper bound comes from the exponential decay in $L^2(\mu^n)$ of the semigroup \eqref{eq.exponential.poincare}. This gives the following upper bound, 

\begin{equation}\label{eq.proof0}
{\rm Var}_{\mu^n}(f)\leq 4\int_0^\infty e^{-2s} \sum_{i=1}^n\int_{C_n}Q_s^2(D_if)d\mu^n ds.
\end{equation}

\noindent Then, set 
\[
K(s)=\sum_{i=1}^n\int_{C_n}Q_s^2(D_if)d\mu^n\quad \text{for any}\quad s\geq 0.
\]
By a further integration by parts \eqref{eq.ipp} and applying again the  fundamental theorem of calculus, we get for any $s\geq 0$
\[
K(s)=K(\infty)-\int_s^\infty K'(u)du=K(\infty)+2\sum_{i,j=1}^n\int_s^\infty \int_{C_n}Q^2_u(D_{ij}f)d\mu^n du.
\]
Besides, by ergodicity, we have

\[
K(\infty)=\sum_{i=1}^n\bigg(\int_{C_n}D_ifd\mu^n\bigg)^2=0,
\]
where the last equality comes from the fact that, for any $i\in\{1,\ldots,n\}$,  $D_i f$ is centered under the measure $\mu^n$. Therefore, we have

\begin{equation}\label{eq.proof1}
K(s)=2\sum_{i,j=1}^n\int_s^\infty\int_{C_n}Q_u^2(D_{ij}f)d\mu^ndu\quad \text{for any}\,\, s\geq 0. 
\end{equation}

\noindent Substitute \eqref{eq.proof1} into \eqref{eq.proof0} and apply Fubini's theorem to get

\[
{\rm Var}_{\mu^n}(f)\leq 4\sum_{i,j=1}^n\int_0^\infty (1-e^{-2u})\int_{C_n}Q_u^2(D_{ij}f)d\mu^ndu.
\]

\noindent Again, set $2s=u$ and use the exponential decay of $(Q_t)_{t\geq 0}$ in $L^2(\mu^n)$ :

\[
\text{i.e.}\quad \|Q_{2s}(D_{ij}f)\|_2^2\leq e^{-2s}\|Q_s(D_{ij}f)\|_2^2\quad \text{for any}\quad  (i,j)\in\{1,\ldots,n\}^2.
\]
This yields

\[
{\rm Var}_{\mu^n}(f)\leq 8\sum_{i,j=1}^n\int_0^\infty e^{-2s}(1-e^{-4s})\int_{C_n}Q_s^2(D_{ij}f)d\mu^nds.
\]

\noindent Now, cut the sum in two parts (if $i=j$ or not). Notice that $D_{ii}=2D_i$ for any $i=1,\ldots,n$. The variance of $f$ is now bounded by two terms
\[
32\sum_{i=1}^n\int_0^\infty e^{-2s}(1-e^{-4s})\int_{C_n}Q_s^2(D_{i}f)d\mu^nds+8\sum_{i\neq j}\int_0^\infty e^{-2s}(1-e^{-4s})\int_{C_n}Q_s^2(D_{ij}f)d\mu^nds.
\]

\noindent  Let $s_0>0$ be a parameter to be chosen later. The first term of the preceding sum is managed as follow

\begin{eqnarray*}
32\sum_{i=1}^n\int_0^\infty e^{-2s}(1-e^{-4s})\int_{C_n}Q_s^2(D_{i}f)d\mu^nds&=&32\sum_{i=1}^n\int_0^{s_0} e^{-2s}(1-e^{-4s})\int_{C_n}Q_s^2(D_{i}f)d\mu^nds\\
&+&32\sum_{i=1}^n\int_{s_0}^\infty e^{-2s}(1-e^{-4s})\int_{C_n}Q_s^2(D_{i}f)d\mu^nds
\end{eqnarray*}

\noindent It is obvious to see that, 

\begin{eqnarray*}
\sum_{i=1}^n\int_0^{s_0} e^{-2s}(1-e^{-4s})\int_{C_n}Q_s^2(D_{i}f)d\mu^nds &\leq &\sum_{i=1}^n\int_0^{s_0} 4s\int_{C_n}Q_s^2(D_{i}f)d\mu^nds\\
&\leq& 4s_0\int_0^\infty \sum_{i=1}^n\int_{C_n}Q_s^2(D_if)d\mu^nds\\
&=&2 s_0 {\rm Var}_{\mu^n}(f)
\end{eqnarray*}
\noindent where the last equality comes from the dynamical representation of the variance along the semigroup \eqref{eq.representation.variance.cube}. Therefore, we have 

\begin{eqnarray*}
{\rm Var}_{\mu^n}(f)&\leq&32\sum_{i=1}^n\int_{s_0}^\infty e^{-2s}(1-e^{-4s})\int_{C_n}Q_s^2(D_{i}f)d\mu^nds\\
&+& 8\sum_{i\neq j}\int_0^\infty e^{-2s}(1-e^{-4s})\int_{C_n}Q_s^2(D_{ij}f)d\mu^nds +64s_0{\rm Var}_{\mu^n}(f)
\end{eqnarray*}

\noindent Now, let us choose $s_0$ such $64s_0\leq 1/2$ ; it yields 

\begin{eqnarray*}
\frac{1}{2}{\rm Var}_{\mu^n}(f)&\leq& 32\int_{s_0}^\infty e^{-2s}(1-e^{-4s})\sum_{i=1}^n\int_{C_n}Q_s^2(D_if)d\mu^nds\\
&+&8\sum_{i\neq j}\int_0^\infty e^{-2s}(1-e^{-4s})\int_{C_n}Q_s^2(D_{ij}f)d\mu^nds.
\end{eqnarray*}

The hypercontractive property \eqref{eq.hypercontractivite.bonami.beckner} of the Bonami-Beckner's semigroup can be used to bound the first integral (of the right hand side) of the preceding inequality. Since by Jensen inequality, $(Q_t)_{t\geq 0}$ is also a contraction of $L^2(\mu^n)$, for any $i=1,\ldots,n$ and every $s\geq s_0$, we also have

\[
\|Q_s(D_if)\|_2^2=\|Q_{s-s_0}\circ Q_{s_0}(D_if)\|_2^2\leq \|Q_{s_0}(D_if)\|_2^2.
\]

\noindent Thus, 

\begin{eqnarray*}
32\int_{s_0}^\infty e^{-2s}(1-e^{-4s})\sum_{i=1}^n\int_{C_n}Q_s^2(D_if)d\mu^nds&\leq&32\sum_{i=1}^n\|Q_{s_0}(D_if)\|_2^2\int_{s_0}^\infty e^{-2s}(1-e^{-4s})ds\\
&\leq&16 \sum_{i=1}^n\|Q_{s_0}(D_if)\|_2^2\\
&\leq& 16\sum_{i=1}^n\|D_if\|_{1+e^{-2s_0}}^2
\end{eqnarray*}

\noindent where, in the last inequality, we used the hypercontractive property \eqref{eq.hypercontractivite.bonami.beckner}. To conclude the proof, we have to bound the sum when $i\neq j$.

\begin{eqnarray*}
I&=&8\sum_{i\neq j}\int_0^\infty e^{-2s}(1-e^{-4s})\int_{C_n}Q_s^2(D_{ij}f)d\mu^nds\\
&\leq& 16\sum_{i\neq j}\int_0^\infty e^{-2s}(1-e^{-2s})\int_{C_n}Q_s^2(D_{ij}f)d\mu^nds.
\end{eqnarray*}

Again, by the hypercontractive property \eqref{eq.hypercontractivite.bonami.beckner} of $(Q_t)_{t\geq 0}$ we have, for any function $g\,:C_n\,\to\R$,

\[
\|Q_t(g)\|_2^2\leq \|g\|_{1+e^{-2t}}^2\quad \text{for any}\quad  t\geq 0.
\]

\noindent Apply this to $g=D_{ij}f$, for any $i,j=1,\ldots,n$ with $i\neq j$. Then, set $v=1+e^{-2t}$ to get

\begin{equation}\label{eq.hypercontractive.estimates}
I\leq 16\sum_{i\neq j}\int_1^2 (2-v)\|D_{ij}f\|_v^2dv.
\end{equation}

\noindent Furthermore, H\"older's inequality yields $\|D_{ij}f\|_v\leq \|D_{ij}f\|_1^\theta\|D_{ij}f\|_2^{1-\theta}$, with $\theta=\theta(v)$ satisfying $\frac{1}{v}=\frac{\theta}{1}+\frac{1-\theta}{2}$, for any $v\in[1,2]$. To sum up, we have

\[
I\leq16\sum_{i\neq j}\|D_{ij}f\|_2^2 \int_1^2(2-v)\bigg(\frac{\|D_{ij}f\|_1}{\|D_{ij}f\|_2}\bigg)^{2\theta}dv.
\]

\noindent Now, set $\alpha=\frac{\|D_{ij}f\|_1}{\|D_{ij}f\|_2}\leq 1$, after a change of variables, we easily obtain, for $i\neq j$, 

\[
\int_1^2(2-v)\bigg(\frac{\|D_{ij}f\|_1}{\|D_{ij}f\|_2}\bigg)^{2\theta}dv=\int_0^1ue^{-\frac{2u}{2-u}\log (1/\alpha)}du.
\]

\noindent Then, observe that $\int_0^1ue^{-\frac{2u}{2-u}\log (1/\alpha)}du\leq \frac{C}{\big[1+\log(1/\alpha)\big]^2}$ with $C>0$ a numerical constant. Finally, we have

\[
I\leq C\sum_{i\neq j}^n\frac{\|D_{ij}f\|_2^2}{\bigg[1+\log\bigg(\frac{\|D_{ij}f\|_2}{\|D_{ij}f\|_1}\bigg)\bigg]^2}
\]

\begin{rem}
The scheme of proof can be extended to higher order with minor modifications. For instance, for the order three, cut the sum in three parts : 
\begin{itemize}
\item [$\bullet$] the diagonal terms will give derivatives $D_i$ of order one ;\\
\item  [$\bullet$] when two indexes are equal we will obtain derivatives $D_{ik}$ of order two ;\\
\item [$\bullet$] the other terms will give derivatives $D_{ijk}=D_i\circ D_j\circ D_k$ of order three.\\
\end{itemize} 
Then, it is possible to apply the same methodology. Since the notations are a little bit heavy, we leave the details to the reader.
\end{rem}

\subsection{Proof of Corollary \ref{cor.kkl.ordre2}}

With the Theorem \ref{thm.talagrand.ordre.superieur.cube.discret.ordre2} at hand we can prove Corollary \ref{cor.kkl.ordre2}. \\

Consider $f\,:\,C_n\to\{0,1\}$ a Boolean function. Then, apply inequality \eqref{eq.talagrand.ordre.superieur.cube.discret.ordre2} from Theorem \ref{thm.talagrand.ordre.superieur.cube.discret.ordre2} to $f$ .\\

Then, thanks to the formulation of influences in terms of $L^p$ norms of  partial derivatives, observe that $\|D_if\|_{1+e^{-2s_0}}^2=\big[I_i(f)\big]^{2/(1+e^{-2s_0})}$ for any $i=1,\ldots,n$. Besides, for any $s_0\geq 0$, notice that $\frac{2}{1+e^{-2s_0}}\in(1,2)$. Therefore, with $s_0\in[0,\frac{1}{128}]$ being fixed, this can be rewritten as $1+\eta$ with $0<\eta<1$ where $\eta=\eta(s_0)$ is independent of $f$ and $n$.\\
\newline

\noindent Now, recall \eqref{eq.comparaison} which gives, for any $i\neq j$,

\[
\|D_{ij}f\|_1\leq \|D_{ij}f\|_2^2\leq 2\|D_{ij}f\|_1.
\]

\noindent Thus, since $I_{(i,j)}(f)=\frac{1}{2}\|D_{ij}f\|_1$ for any $i\neq j$, 

\[
{\rm Var}_{\mu^n}(f)\leq C\sum_{i=1}^nI_i(f)^{1+\eta}+C\sum_{i\neq j}\frac{I_{(i,j)}(f)}{\bigg[1+\log\bigg(\frac{1}{\sqrt{4I_{(i,j)}(f)}}\bigg)\bigg]^2}.
\]

\noindent If the first sum is larger than the second one, we get

\[
{\rm Var}_{\mu^n}(f)\leq C\sum_{i=1}^nI_i(f)^{1+\eta}.
\]

\noindent Thus, there exists some $i\in\{1,\ldots,n\}$ such that $I_i(f)^{1+\eta}\geq \frac{{\rm Var}_{\mu^n}(f)}{ Cn}$. If it is not the case, we obtain 

\[
{\rm Var}_{\mu^n}(f)\leq C\sum_{i\neq j}\frac{I_{(i,j)}(f)}{\bigg[1+\log\bigg(\frac{1}{\sqrt{4I_{(i,j)}(f)}}\bigg)\bigg]^2}.
\]

\noindent To conclude, it is enough to follow the scheme of proof presented in the introduction (below the equation \eqref{eq.poincare}). We leave the details to the reader.

\begin{rem}
\begin{enumerate}
\item Following (with minor and obvious variations) the proof of Proposition \ref{prop.benor.linial}, it is possible to show  that the influences $I_{(i,j)}({\rm Tribes}_{km})$, for $i\neq j$, are precisely of order $\frac{\log^2n}{n^2}$.\\
\item As communicated to us by Krzysztof Oleszkiewicz (cf. \cite{Oles2})  an alternative argument based on spectral decomposition and logarithmic Sobolev inequality can be used to reach conclusion which is similar to the one obtained in Corollary \ref{cor.kkl.ordre2}.\\
\end{enumerate}
\end{rem}

\section{Extension to a Gaussian setting}\label{four}

It is well known (cf. \cite{Chatt1}) that Talagrand inequality has also been obtained for the standard Gaussian measure $\gamma_n$ on $\R^n$. We want to emphasize the fact that the interpolation method (with the exact same arguments) used for $(C_n,\mu^n)$ also work $(\R^n,\gamma_n)$ with the Ornstein-Uhlenbeck semigroup instead.\\

First, we will briefly remind the reader of some properties of such semigroup (for more details we refer the reader to \cite{BGL}). Then, we present a variance representation formula, which already appeared under a different form in \cite{HPAS}. This representation formula can be seen as a Taylor expansion of the variance of $f$ with some remainder term. Finally we will briefly explain how the proof can be done with the help of the arguments used during the proof of Theorem \ref{thm.talagrand.ordre.superieur.cube.discret.ordre2}.

\subsection{Ornstein-Uhlenbeck semigroup}
This section gather some essential properties of the Ornstein-Uhlenbeck semigroup $(P_t)_{t\geq 0}$. Let $f\,:\,\R^n\to\R$ be a smooth function, the Ornstein-Uhlenbeck's semigroup satisfies the following properties.

 \begin{prop} 
 $(P_t)_{t\geq 0}$ is Markovian and $\gamma_n$ is its invariant and reversible measure. Namely, for any $t\geq 0$, we have
 
 \[
 P_t(1)=1\quad \text{and} \quad\int_{\R^n}fP_t(g)d\gamma_n=\int_{\R^n}gP_t(f)d\gamma_n,
 \]
 
 \noindent for any smooth functions $f,g\,:\,\R^n\to\R$. The Ornstein-Uhlenbeck's semigroup admits a integral representation formula,

 
 \[
 P_t(f)(x)=\int_{\R^n}f\big(e^{-t}x+\sqrt{1-e^{-2t}}y\big)d\gamma_n(y)
  \]
  for any $x\in\R^n$ and any $t\geq 0$.
\end{prop}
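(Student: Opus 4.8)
The plan is to take Mehler's integral representation $P_t(f)(x)=\int_{\R^n}f\big(e^{-t}x+\sqrt{1-e^{-2t}}\,y\big)\,d\gamma_n(y)$ as the working definition of the semigroup (exactly as the integral representation was treated in the discrete case) and to read off every asserted property from it by elementary Gaussian manipulations; if instead one prefers to start from $P_t=e^{tL}$ with $L$ the Ornstein--Uhlenbeck generator, one first checks that the right-hand side above solves the associated heat equation with the correct initial datum and invokes uniqueness. The single algebraic fact that drives everything is the following: for fixed $t\ge 0$ the vector $(e^{-t},\sqrt{1-e^{-2t}})\in\R^2$ has Euclidean norm one, so that if $X,Y$ are independent $\gamma_n$-distributed random vectors on $\R^n$, then $e^{-t}X+\sqrt{1-e^{-2t}}\,Y$ is again $\gamma_n$-distributed; more precisely the pair $\big(X,\,e^{-t}X+\sqrt{1-e^{-2t}}\,Y\big)$ is a centered Gaussian vector in $\R^n\times\R^n$ which couples the $i$-th coordinates through the matrix $\bigl(\begin{smallmatrix}1 & e^{-t}\\ e^{-t} & 1\end{smallmatrix}\bigr)$ and leaves distinct coordinates independent.

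From here the proof is short. Taking $f\equiv 1$ gives $P_t1(x)=\int_{\R^n}1\,d\gamma_n=1$, and $P_t$ maps nonnegative functions to nonnegative functions since it integrates against a probability measure; together these yield the Markovian property. Invariance of $\gamma_n$ follows by writing, for $X\sim\gamma_n$ independent of $Y\sim\gamma_n$, $\int_{\R^n}P_tf\,d\gamma_n=\E\big[f\big(e^{-t}X+\sqrt{1-e^{-2t}}\,Y\big)\big]=\E[f(X)]=\int_{\R^n}f\,d\gamma_n$, using the distributional identity above and Fubini's theorem. For reversibility I would note that the covariance matrix $\bigl(\begin{smallmatrix}1 & e^{-t}\\ e^{-t} & 1\end{smallmatrix}\bigr)$ is symmetric, hence the $\R^n\times\R^n$-valued Gaussian pair $\big(X,\,e^{-t}X+\sqrt{1-e^{-2t}}\,Y\big)$ has the same law as the swapped pair $\big(e^{-t}X+\sqrt{1-e^{-2t}}\,Y,\,X\big)$; consequently $\int_{\R^n}fP_tg\,d\gamma_n=\E\big[f(X)\,g\big(e^{-t}X+\sqrt{1-e^{-2t}}\,Y\big)\big]$ is symmetric in $f$ and $g$, which is precisely the claimed self-adjointness on $L^2(\gamma_n)$.

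If one also wants to justify the semigroup name $P_{t+s}=P_t\circ P_s$, I would expand $P_t(P_sf)(x)=\int_{\R^n}\int_{\R^n} f\big(e^{-(t+s)}x+e^{-t}\sqrt{1-e^{-2s}}\,y+\sqrt{1-e^{-2t}}\,z\big)\,d\gamma_n(y)\,d\gamma_n(z)$ and observe that $e^{-t}\sqrt{1-e^{-2s}}\,Y+\sqrt{1-e^{-2t}}\,Z$ is a centered Gaussian vector with covariance $\big(e^{-2t}(1-e^{-2s})+(1-e^{-2t})\big)\,\mathrm{Id}=(1-e^{-2(t+s)})\,\mathrm{Id}$, hence has the law of $\sqrt{1-e^{-2(t+s)}}\,W$ with $W\sim\gamma_n$; substituting back recovers $P_{t+s}f(x)$.

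I do not expect a genuine obstacle here: the statement is classical and every step is a routine Gaussian computation. The only points requiring a little care are the bookkeeping of covariances in the semigroup identity and the legitimacy of Fubini's theorem, which is immediate for bounded measurable (or $\gamma_n$-integrable) $f,g$ and then extends to the stated smooth case with no effort; the restriction to smooth functions in the statement is used only downstream, when the generator and its action on polynomials are needed.
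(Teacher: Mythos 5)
Your proposal is correct, and it is the standard textbook argument: take Mehler's formula as the definition and read every claimed property off the rotation-invariance of the product Gaussian, i.e.\ off the fact that the pair $\bigl(X,\ e^{-t}X+\sqrt{1-e^{-2t}}\,Y\bigr)$ is centered Gaussian with the symmetric coordinate-wise covariance $\bigl(\begin{smallmatrix}1 & e^{-t}\\ e^{-t} & 1\end{smallmatrix}\bigr)$. The paper itself gives no proof of this Proposition; it simply records these well-known facts and points to Bakry--Gentil--Ledoux for the details, so there is no paper argument to compare your route against. The identification of reversibility with the symmetry of that joint law, and the semigroup property with the variance bookkeeping $e^{-2t}(1-e^{-2s})+(1-e^{-2t})=1-e^{-2(t+s)}$, are exactly the two observations one would expect a careful version of the cited reference to supply. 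One tiny remark: when you iterate the Mehler formula to get $P_t(P_sf)$, the inner and outer integration variables actually enter as $e^{-s}\sqrt{1-e^{-2t}}\,z+\sqrt{1-e^{-2s}}\,y$ rather than the expression you wrote, but since both have the same centered Gaussian law with variance $1-e^{-2(t+s)}$ per coordinate, the conclusion is unaffected; it is only a labelling slip, not a gap.
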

  
  \begin{rem}
 This integral representation easily leads to the following commutation property  between the semigroup and the gradient $\nabla$ :
 
 \begin{equation}\label{eq.commutation.ou}
\nabla P_t =e^{-t} P_t\nabla\quad \text{for any}\quad t\geq 0.
 \end{equation}
 
 \noindent An integration by parts formula also holds in this setting. Indeed, denote by $L=\Delta-x\cdot\nabla$ the infinitesimal generator of $(P_t)_{t\geq 0}$, then for any  smooth functions $f\,:\,\R^n\to\R$ and $g\,:\,\R^n\to\R$ it holds

 \begin{equation}\label{eq.ipp.ou}
 \int_{\R^n}f(-Lg)d\gamma_n=\int_{\R^n}\nabla f \cdot\nabla g d\gamma_n.
 \end{equation}
\end{rem}
  
It has been proven (cf. \cite{BGL, Nel}) that $(P_t)_{t\geq 0}$ also satisfies an hypercontractive property.
 
\begin{thm}[Nelson]\label{thm.hypercontracvite.ou}
The semigroup $(P_t)_{t\geq 0}$ is hypercontractive. Namely, for any $f\,:\,\R^n\to\R$ smooth enough, every $t\geq 0$ and every $p\geq 1$

\begin{equation}\label{eq.hypercontractivite.ou}
\|P_t(f)\|_{q}\leq \|f\|_{p},
\end{equation}
\noindent with $p=p(t)=1+(q-1)e^{-2t}$.
\end{thm}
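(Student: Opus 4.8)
The plan is to prove Nelson's theorem by the classical semigroup argument of Gross, reducing hypercontractivity to the Gaussian logarithmic Sobolev inequality. Fix $q>1$ and $t\ge 0$; the remaining case $q=1$ (hence $p=1$) is just the $L^1(\gamma_n)$-contractivity of $P_t$, immediate from Markovianity and invariance of $\gamma_n$. Put $q(s)=1+(q-1)e^{-2t}e^{2s}$, so that $q(0)=p$ and $q(t)=q$, and consider $\beta(s):=\|P_sf\|_{q(s)}$ on $[0,t]$; the goal is to show that $\beta$ is non-increasing, since then $\|P_tf\|_q=\beta(t)\le\beta(0)=\|f\|_p$, which is \eqref{eq.hypercontractivite.ou}. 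Before differentiating I would dispose of the regularity issues: replacing $f$ by $|f|$ (legitimate since $|P_sf|\le P_s|f|$), then by $|f|+\varepsilon$ and letting $\varepsilon\downarrow 0$, and truncating, it suffices to argue for $f$ smooth, bounded, and bounded below by a positive constant; the general $f\in L^p(\gamma_n)$ then follows by density and dominated/monotone convergence.

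The first genuine step is the Gaussian logarithmic Sobolev inequality
\[
\mathrm{Ent}_{\gamma_n}(h^2):=\int_{\R^n}h^2\log h^2\,d\gamma_n-\Big(\int_{\R^n}h^2\,d\gamma_n\Big)\log\Big(\int_{\R^n}h^2\,d\gamma_n\Big)\ \le\ 2\int_{\R^n}|\nabla h|^2\,d\gamma_n ,
\]
which I would prove using only the tools already available, namely the commutation \eqref{eq.commutation.ou} and the integration by parts \eqref{eq.ipp.ou}. For $g>0$ set $\psi(s)=\int_{\R^n}P_sg\,\log P_sg\,d\gamma_n$; using $\int LP_sg\,d\gamma_n=0$ and \eqref{eq.ipp.ou} one gets $\psi'(s)=-\int|\nabla P_sg|^2/P_sg\,d\gamma_n$, and since $\psi(0)=\int g\log g\,d\gamma_n$ while $\psi(\infty)=\big(\int g\,d\gamma_n\big)\log\big(\int g\,d\gamma_n\big)$ by ergodicity, integrating in $s$ yields the representation $\mathrm{Ent}_{\gamma_n}(g)=\int_0^\infty\!\int_{\R^n}|\nabla P_sg|^2/P_sg\,d\gamma_n\,ds$. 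Now \eqref{eq.commutation.ou} gives $|\nabla P_sg|^2=e^{-2s}|P_s\nabla g|^2$, and Cauchy--Schwarz applied coordinatewise to the positive operator $P_s$ gives $|P_s\nabla g|^2\le (P_sg)\,P_s\!\big(|\nabla g|^2/g\big)$; integrating against the invariant measure $\gamma_n$ and then over $s$ produces $\mathrm{Ent}_{\gamma_n}(g)\le\tfrac12\int|\nabla g|^2/g\,d\gamma_n$, and the case $g=h^2$ is the announced inequality.

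Next comes the differentiation of $\beta$. Writing $u=P_sf>0$ and $g=u^{q(s)/2}$, so $\beta(s)^{q(s)}=\int g^2\,d\gamma_n$, and using $\partial_su=Lu$ together with the identity $\int u^{q-1}Lu\,d\gamma_n=-\frac{4(q-1)}{q^2}\int|\nabla g|^2\,d\gamma_n$ coming from \eqref{eq.ipp.ou}, a routine computation gives
\[
q(s)^2\,\beta(s)^{q(s)}\,\frac{d}{ds}\log\beta(s)=q'(s)\Big(\mathrm{Ent}_{\gamma_n}(g^2)-\frac{4\,(q(s)-1)}{q'(s)}\int_{\R^n}|\nabla g|^2\,d\gamma_n\Big).
\]
Since $q'(s)=2(q(s)-1)>0$, the bracket equals $\mathrm{Ent}_{\gamma_n}(g^2)-2\int|\nabla g|^2\,d\gamma_n$, which is $\le 0$ by the previous step; hence $\beta'\le 0$ on $[0,t]$ and the theorem follows.

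The step I expect to be the real obstacle is the logarithmic Sobolev inequality itself (equivalently, the non-negativity of the Bakry--\'Emery curvature of $(P_t)_{t\ge0}$); the rest is essentially bookkeeping and standard approximation. An alternative route, staying closer to what has already been established in the paper, would deduce \eqref{eq.hypercontractivite.ou} from the Bonami--Beckner estimate \eqref{eq.hypercontractivite.bonami.beckner} by tensorizing it on $C_N$ (the constants being dimension-free) and applying a central limit theorem to $x\mapsto f\big((x_1+\cdots+x_N)/\sqrt N\big)$, letting $N\to\infty$; there the delicate point is to show that $Q_s$, applied to such smooth functions of the normalized sum, converges in the relevant $L^r(\mu^N)$ norms to $P_sf$, i.e. a quantitative central limit theorem for the two semigroups, and then to pass from one dimension to dimension $n$ by a product argument.
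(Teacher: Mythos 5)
The paper does not prove this theorem; it states Nelson's hypercontractivity as a classical background fact with citations to \cite{BGL} and \cite{Nel}, so there is no in-paper argument to compare against. Your proof is a correct and complete rendering of the standard route due to Gross: prove the Gaussian logarithmic Sobolev inequality $\mathrm{Ent}_{\gamma_n}(h^2)\le 2\int|\nabla h|^2\,d\gamma_n$ by the semigroup interpolation using the commutation \eqref{eq.commutation.ou}, integration by parts \eqref{eq.ipp.ou}, invariance, and the pointwise Cauchy--Schwarz inequality $(P_s\phi)^2\le(P_s\psi)\,P_s(\phi^2/\psi)$; then differentiate $\beta(s)=\|P_sf\|_{q(s)}$ with $q(s)=1+(q-1)e^{-2(t-s)}$, note $q'(s)=2(q(s)-1)$, and invoke the log-Sobolev inequality for $g=(P_sf)^{q(s)/2}$ to conclude $\beta'\le 0$. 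I checked the bookkeeping: your identity $\int u^{q-1}Lu\,d\gamma_n=-\tfrac{4(q-1)}{q^2}\int|\nabla g|^2\,d\gamma_n$ and the resulting bracket $\mathrm{Ent}_{\gamma_n}(g^2)-2\int|\nabla g|^2\,d\gamma_n$ are exactly right, and the reduction to $f$ positive, smooth and bounded away from zero is the correct way to justify the differentiation. Your sketched alternative (tensorize Bonami--Beckner on $C_N$ and pass to the Gaussian via a CLT) is also a legitimate classical route, and you correctly flag that its delicate point is the quantitative $L^r$-convergence of $Q_s$ applied to functions of the normalized sum to $P_sf$; for the purposes of this paper, however, the Gross argument you give in full is the cleaner and more self-contained one, since it uses only the tools the paper has already introduced for $(P_t)_{t\ge 0}$.
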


\subsection{Variance representation}

\noindent The theorem below will be crucial to reach the version of Theorem \ref{thm.talagrand.ordre.superieur.cube.discret.ordre2} in a Gaussian setting.\\

 In the sequel, $\nabla^2f$ will stand for the Hessian matrix of any smooth function $f\,:\,\R^n\to\R$ and, with obvious notations, $\nabla^pf$ (with $p\geq 2$) corresponds to higher order. We said that $f\in\mathcal{C}^m(\R^n)$ if , for every $ {\displaystyle \alpha _{1},\alpha _{2},\ldots ,\alpha _{n}}$ non-negative integers, such that ${\displaystyle \alpha =\alpha _{1}+\alpha _{2}+\cdots +\alpha _{n}\leq m}$, 
 \[
{\displaystyle {\frac {\partial ^{\alpha }f}{\partial x_{1}^{\alpha _{1}}\,\partial x_{2}^{\alpha _{2}}\,\cdots \,\partial x_{n}^{\alpha _{n}}}}}
\]
 exists and is continuous on $\R^n$.
 
\begin{thm}\label{thm.developpement.variance.taylor}
Within the preceding framework, consider $f\,:\,\R^n\to\R$ and assume that there exists $m\geq 1$ such that $f\in\mathcal{C}^m(\R^n)$. Assume also that $f$ and all its partial derivatives belong to $L^2(\gamma_n)$. Then, for every $1\leq p\leq m-1$, we have the following representation formula 

\begin{equation}\label{eq.developpement.variance.taylor}
{\rm Var}_{\gamma_n}(f)=\sum_{k=1}^p\frac{1}{k!}\bigg|\int_{\R^n}\nabla^kfd\gamma_n\bigg|^2+\frac{2}{p!}\int_0^\infty e^{-2t}\big(1-e^{-2t}\big)^p \int_{\R^n}\big|P_t(\nabla^{p+1}f)\big|^2d\gamma_ndt, 
\end{equation}
where $|\cdot|$ stands for the Euclidean norm.

\end{thm}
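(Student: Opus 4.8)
The plan is to prove \eqref{eq.developpement.variance.taylor} by induction on $p$, using the semigroup representation of the variance exactly as in the proof of Theorem \ref{thm.talagrand.ordre.superieur.cube.discret.ordre2} but with $(P_t)_{t\geq0}$ in place of $(Q_t)_{t\geq0}$. Write $\psi_k(t)=\int_{\R^n}\big|P_t(\nabla^kf)\big|^2d\gamma_n$ for $1\leq k\leq m$ and $t\geq0$, the semigroup and the gradient acting componentwise on the symmetric tensor $\nabla^kf$. First I would record the base case: from $\frac{d}{dt}\|P_tf\|_2^2=-2\int_{\R^n}|\nabla P_tf|^2d\gamma_n$ (integration by parts \eqref{eq.ipp.ou}) and ${\rm Var}_{\gamma_n}(P_tf)\to0$ as $t\to\infty$, one gets ${\rm Var}_{\gamma_n}(f)=2\int_0^\infty\int_{\R^n}|\nabla P_tf|^2d\gamma_n\,dt$; applying the commutation relation \eqref{eq.commutation.ou} in the form $|\nabla P_tf|^2=e^{-2t}|P_t\nabla f|^2$ then gives
\[
{\rm Var}_{\gamma_n}(f)=2\int_0^\infty e^{-2t}\psi_1(t)\,dt,
\]
which is \eqref{eq.developpement.variance.taylor} for the (formally admissible) value $p=0$, the sum being empty.

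The heart of the argument is one recursion step: for every integer $p$ with $0\leq p\leq m-2$,
\[
\frac{2}{p!}\int_0^\infty e^{-2t}(1-e^{-2t})^p\psi_{p+1}(t)\,dt=\frac{1}{(p+1)!}\bigg|\int_{\R^n}\nabla^{p+1}fd\gamma_n\bigg|^2+\frac{2}{(p+1)!}\int_0^\infty e^{-2t}(1-e^{-2t})^{p+1}\psi_{p+2}(t)\,dt.
\]
To establish it I would differentiate $\psi_{p+1}$ componentwise, exactly as in the base case, to get $\psi_{p+1}'(t)=-2e^{-2t}\psi_{p+2}(t)$, and use the $L^2(\gamma_n)$-ergodicity of $(P_t)_{t\geq0}$, which gives $\psi_{p+1}(\infty)=\big|\int_{\R^n}\nabla^{p+1}fd\gamma_n\big|^2$; hence $\psi_{p+1}(t)=\big|\int_{\R^n}\nabla^{p+1}fd\gamma_n\big|^2+2\int_t^\infty e^{-2u}\psi_{p+2}(u)\,du$. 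Substituting this into the left-hand side, the constant part yields $\frac{2}{p!}\big|\int_{\R^n}\nabla^{p+1}fd\gamma_n\big|^2\int_0^\infty e^{-2t}(1-e^{-2t})^pdt=\frac{1}{(p+1)!}\big|\int_{\R^n}\nabla^{p+1}fd\gamma_n\big|^2$ after the change of variables $v=1-e^{-2t}$, while for the double integral Fubini's theorem together with $\int_0^u e^{-2t}(1-e^{-2t})^pdt=\frac{(1-e^{-2u})^{p+1}}{2(p+1)}$ produces exactly the last displayed term.

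Finally, iterating this identity from $p=0$ up to the desired value telescopes: the constant contributions accumulate into $\sum_{k=1}^p\frac{1}{k!}\big|\int_{\R^n}\nabla^kfd\gamma_n\big|^2$ and what is left is the remainder $\frac{2}{p!}\int_0^\infty e^{-2t}(1-e^{-2t})^p\psi_{p+1}(t)\,dt$, which is \eqref{eq.developpement.variance.taylor}. The step needing care is not conceptual but the analytic bookkeeping: one must justify differentiating $\psi_k$ componentwise under the integral sign, the $L^2$-convergence $P_t(\nabla^kf)\to\int_{\R^n}\nabla^kfd\gamma_n$, and the application of Fubini. These all follow from the hypotheses that $f\in\mathcal{C}^m(\R^n)$ with $f$ and all its partial derivatives in $L^2(\gamma_n)$: each $\psi_k$ with $k\leq m$ is then finite, non-increasing, bounded by $\psi_k(0)=\|\nabla^kf\|_2^2$ and smooth in $t$ with $\psi_k'=-2e^{-2t}\psi_{k+1}$, so every integral above converges absolutely and all exchanges of limits are legitimate. (Alternatively, the identity can be checked termwise on the Hermite expansion of $f$, the semigroup acting diagonally; the semigroup route above is closer in spirit to the rest of the paper.)
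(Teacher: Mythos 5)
Your proof is correct and follows essentially the same route as the paper: semigroup representation of the variance, differentiation of $\psi_{p+1}$ via commutation \eqref{eq.commutation.ou} and integration by parts \eqref{eq.ipp.ou}, ergodicity at $t=\infty$, Fubini, and induction on $p$. You have merely packaged the inductive step more explicitly as a single telescoping identity, whereas the paper spells out the first two iterations and then encodes the recursion in the coefficients $a_k(t)=\frac{2}{k!}e^{-2t}(1-e^{-2t})^k$; the content is identical.
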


\begin{rem}
\begin{enumerate}
\item Notice that, when $p\to\infty$, the formula \eqref{eq.developpement.variance.taylor} yields, up to integration by parts, the decomposition of a function of $L^2(\gamma_n)$ along the Hermite polynomial basis (cf. \cite{BGL}).\\
\item In his article \cite{Led4}, Ledoux uses similar interpolation arguments (with the interval $[0,t]$ instead of $[t,+\infty[$) in order to obtain another representation formula for the variance of a function $f$. \\
\item  As in \cite{Led4},  the same proof can be performed with the entropy instead of the variance. However, formulas are not so easily handled. For instance, at the first iteration of the method we obtain, for $f\,\,:\,\R^n\to\R$ such that $f>0$, 

\[
2{\rm Ent}_{\gamma_n}(f)=\frac{\bigg|\int_{\R^n}\nabla fd\gamma_n\bigg|^2}{\int_{\R^n}fd\gamma_n}+\int_0^\infty e^{-2u}(1-e^{-2u})\int_{\R^n}k_ud\gamma_n du
\]
with $k_u=(P_uf)^{-3}\big|P_u(\nabla f)^{t}P_u(\nabla f)-P_u(f)P_u(\nabla^2f)\big|^2$ and where

 \[
 {\rm Ent}_{\gamma_n}(f)=\int_{\R^n}f\log fd\gamma_n-\bigg(\int_{\R^n}fd\gamma_n\bigg)\bigg(\log \int_{\R^n}fd\gamma_n\bigg).
 \]
Since $k_u\geq 0$ for every $u\geq0$, this implies, for any $f$ such that $\int_{\R^n}fd\gamma_n$=1,

\[
2{\rm Ent}_{\gamma_n}(f^2)\geq \bigg|\int_{\R^n}\nabla fd\gamma_n\bigg|^2.
\]
\noindent This lower bound corresponds to the inverse logarithmic Sobolev inequality (cf. \cite{BGL}).\\
\end{enumerate}
\end{rem}

\begin{proof}(of Theorem \ref{thm.developpement.variance.taylor})\\
The starting point of the proof is the dynamical representation of the variance of a function $f\,:\R^n\to\R$, along the Ornstein-Uhlenbeck's semigroup (cf. \cite{BGL})

\[
{\rm Var}_{\gamma_n}(f)=2\int_0^\infty e^{-2t}\int_{\R^n}\big|P_t(\nabla f)\big|^2d\gamma_n dt.
\]

\noindent Set 
\[
K_1(t)=\int_{\R^n}\big|P_t(\nabla f)|^2d\gamma_n\quad \text{for any}\quad t\geq 0.
\]

\noindent Then, according the fundamental theorem of calculus, for any $0\leq t\leq s$, we have
\[
K_1(t)=K_1(s)-\int_t^sK'_1(u)du
\]

\noindent using the fact that $\nabla P_u (f)=e^{-u}P_u(\nabla f)$ and the integration by parts formula \eqref{eq.ipp.ou}, we obtain
\[
K'_{1}(u)=\frac{d}{du}\int_{\R^n}|P_u (\nabla f)|^2d\gamma_n=-2\int_{\R^n}e^{-2u}|P_u(\nabla^2f)|^2d\gamma_n
\]

\noindent Finally, for every $0\leq t\leq s$, 
\[
K_1(t)=K_1(s)+2\int_t^se^{-2u}\int_{\R^n}|P_u(\nabla^2f)|^2d\gamma_ndu,
\]

\noindent Thus, when $s\to\infty$, 

\[
K_1(t)=\bigg|\int_{\R^n}\nabla fd\gamma_n\bigg|^2+2\int_t^\infty e^{-2u}\int_{\R^n}|P_u(\nabla^2f)|^2d\gamma_ndu,
\]

\noindent by ergodicity of $(P_t)_{t\geq 0}$. Substitute $K_1$ into the representation formula to get

\[
{\rm Var}_{\gamma_n}(f)=\bigg|\int_{\R^n}\nabla fd\gamma_n\bigg|^2+4\int_0^\infty e^{-2t}\int_t^\infty e^{-2u}\int_{\R^n}|P_u(\nabla^2 f)|^2d\gamma_ndudt.
\]

\noindent Then, by Fubini's Theorem, 

\[
{\rm Var}_{\gamma_n}(f)=\bigg|\int_{\R^n}\nabla fd\gamma_n\bigg|^2+2\int_0^\infty e^{-2u}(1-e^{-2u})\int_{\R^n}|P_u(\nabla^2 f)|^2d\gamma_ndu.
\]

\noindent In order to obtain the general statement, iterate the scheme of proof : set similarly

\[
K_2(u)=\int_{\R^n}|P_u(\nabla^2 f)|^2d\gamma_n,
\]

\noindent then

\[
K_2(u)=\bigg|\int_{\R^n}\nabla^2fd\gamma_n\bigg|^2+2\int_u^\infty e^{-2t}\int_{\R^n}|P_t(\nabla^3f)|^2d\gamma_ndt.
\]

\noindent After some substitution, it is enough to calculate

\[
\bigg|\int_{\R^n}\nabla^2fd\gamma_n\bigg|^2\times\bigg[2\int_0^\infty e^{-2u}(1-e^{-2u})du\bigg]=\frac{1}{2}\bigg|\int_{\R^n}\nabla^2fd\gamma_n\bigg|^2
\]

\noindent and

\[
4\int_0^\infty e^{-2t}\bigg(\int_{\R^n}|P_t (\nabla^3f)|^2d\gamma_n\bigg)\times\bigg[\int_0^te^{-2u}(1-e^{-2u})du\bigg]dt.
\]

\noindent A straightforward calculus yields
 
 \[
 2\int_0^te^{-2u}(1-e^{-2u})du=\big(1-e^{-2t}\big)^2\quad \text{for any}\quad t\geq 0.
 \]
 
\noindent Then, proceed by induction to conclude. Indeed, we can define by induction the coefficients that appeared at each iteration. To this task, set 
\[
a_0(t)=2e^{-2t}\quad \text{for any}\quad t\geq 0\quad  \text{and}\quad  a_1=\int_0^\infty a_0(t)dt.
\]
Then, for $k\geq 1$, $a_k(t)=a_0(t)\int_0^ta_{k-1}(u)du$ and $a_k=\int_0^\infty a_k(t)dt$. It is not difficult to show that, for every $k\geq 0$ and every $t\geq 0$, 

\[
a_k(t)=\frac{2}{k!}e^{-2t}\big(1-e^{-2t}\big)^k.
\]

\noindent Thus, for every $k\geq 0$, $a_k=\frac{1}{k!}$.
\end{proof}

\subsection{Taylor expansion of the variance with remainder term}
We focus on the particular case $p=1$. We present below what can be deduced from the representation formula \eqref{eq.developpement.variance.taylor}.

 \subsubsection{Order 1}
For $p=1$, the representation formula of the variance tells us that

\begin{equation}\label{eq.representation2}
{\rm Var}_{\gamma_n}(f)=\bigg|\int_{\R^n}\nabla fd\gamma_n\bigg|^2+2\int_0^\infty e^{-2t}(1-e^{-2t})\int_{\R^n}|P_t(\nabla^2f)|^2d\gamma_ndt.
\end{equation}

The second term is always strictly positive, so it implies an inverse Poincar\'e inequality (cf. \cite{BGL})

\[
{\rm Var}_{\gamma_n}(f)\geq \bigg|\int_{\R^n}\nabla fd\gamma_n\bigg|^2.
\]

\noindent It is also possible to control the remainder term in order to upper bound the variance of $f$. Indeed, based on \eqref{eq.representation2} , we can apply the hypercontractive scheme of proof (of Talagrand inequality) to reach the following Theorem.

\begin{thm}\label{thm.talagrand.ordre2.gaussien}
Within the preceding framework, for any function $f\,:\,\R^n\to\R$ smooth enough, we have 
\[
{\rm Var}_{\gamma_n}(f)\leq\bigg|\int_{\R^n}\nabla fd\gamma_n\bigg|^2+C\sum_{i,j=1}^n\frac{\|\partial^2_{ij}f\|_2^2}{\bigg[1+\log \frac{\|\partial^2_{ij}f\|_2}{\|\partial^2_{ij}f\|_1}\bigg]^2},
\]
with $C>0$ a universal numerical constant.


\end{thm}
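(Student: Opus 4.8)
The plan is to take the order-one variance representation \eqref{eq.representation2} as the starting point and to control its remainder term by exactly the hypercontractive argument already carried out in the proof of Theorem \ref{thm.talagrand.ordre.superieur.cube.discret.ordre2}, with the Ornstein--Uhlenbeck semigroup $(P_t)_{t\ge 0}$ replacing the Bonami--Beckner one. Since $\nabla^2 f$ is the Hessian matrix and $|\cdot|$ is the Euclidean (Hilbert--Schmidt) norm, linearity of $P_t$ gives $|P_t(\nabla^2 f)|^2=\sum_{i,j=1}^n\big(P_t(\partial^2_{ij}f)\big)^2$, so that the remainder term in \eqref{eq.representation2} is precisely
\[
2\sum_{i,j=1}^n\int_0^\infty e^{-2t}\,(1-e^{-2t})\,\|P_t(\partial^2_{ij}f)\|_2^2\,dt ,
\]
and everything reduces to bounding each summand; the leading term $\big|\int_{\R^n}\nabla f\,d\gamma_n\big|^2$ is carried through untouched.

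First I would apply Nelson's hypercontractivity (Theorem \ref{thm.hypercontracvite.ou}) in the form $\|P_t(g)\|_2^2\le\|g\|_{1+e^{-2t}}^2$ to the scalar functions $g=\partial^2_{ij}f$, then change variables $v=1+e^{-2t}$ (so that $1-e^{-2t}=2-v$ and $e^{-2t}\,dt=-\tfrac12\,dv$) to rewrite the remainder as $\sum_{i,j}\int_1^2(2-v)\,\|\partial^2_{ij}f\|_v^2\,dv$. Next, H\"older's inequality with $\tfrac1v=\theta+\tfrac{1-\theta}{2}$ gives $\|\partial^2_{ij}f\|_v\le\|\partial^2_{ij}f\|_1^{\theta}\,\|\partial^2_{ij}f\|_2^{1-\theta}$; writing $\alpha_{ij}=\|\partial^2_{ij}f\|_1/\|\partial^2_{ij}f\|_2\le1$ and substituting $u=2-v$ turns the inner integral into $\int_0^1 u\,e^{-\frac{2u}{2-u}\log(1/\alpha_{ij})}\,du$, which is bounded by $C\big[1+\log(1/\alpha_{ij})\big]^{-2}$ by the same elementary estimate as in the discrete proof (for small $\log(1/\alpha)$ the integral is near $\int_0^1 u\,du=\tfrac12$, while for large $\log(1/\alpha)$ it concentrates at $u=0$ where $\tfrac{2u}{2-u}\sim u$, so it behaves like $\int_0^\infty u e^{-cu}\,du=c^{-2}$). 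Summing over $i,j$ then yields the stated inequality.

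I do not expect a genuine obstacle, since the argument is a transcription of the discrete one; the points deserving the most attention are rather bookkeeping. One should make sure that ``$f$ smooth enough'' is read as ``$f\in\mathcal{C}^2(\R^n)$ with $f$ and all its first and second partial derivatives in $L^2(\gamma_n)$'', which is exactly the hypothesis needed to invoke the $p=1$ case of Theorem \ref{thm.developpement.variance.taylor}; one should check that the H\"older interpolation step is meaningful, which it is because $L^2(\gamma_n)\subset L^1(\gamma_n)$ forces $\|\partial^2_{ij}f\|_1<\infty$; and one should justify the interchange of the finite sum with the two integrals by nonnegativity of all integrands (Tonelli). The only slightly delicate point, as in the discrete case, is the uniform estimate $\int_0^1 u\,e^{-\frac{2u}{2-u}c}\,du\le C(1+c)^{-2}$ for $c\ge0$, but this is elementary and independent of the ambient dimension $n$, which is what makes the final constant $C$ universal.
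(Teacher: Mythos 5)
Your proof is correct and is essentially identical to the paper's: start from the $p=1$ variance representation \eqref{eq.representation2}, expand $|P_t(\nabla^2 f)|^2$ coordinatewise, apply Nelson's hypercontractivity \eqref{eq.hypercontractivite.ou} to each $\partial^2_{ij}f$, change variables $v=1+e^{-2t}$, interpolate via H\"older, and estimate $\int_0^1 u\,e^{-\frac{2u}{2-u}\log(1/\alpha)}\,du\le C\big[1+\log(1/\alpha)\big]^{-2}$. The paper's proof explicitly defers this last chain of estimates to the discrete case after \eqref{eq.hypercontractive.estimates}, so you have simply filled in what the author leaves to the reader; your remarks on smoothness hypotheses, $L^2\subset L^1$ giving $\alpha\le 1$, and Tonelli are the right bookkeeping.
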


\begin{proof}
Start with the representation formula \eqref{eq.representation2}

\[
2\int_0^\infty e^{-2t}(1-e^{-2t})\int_{\R^n}|P_t(\nabla^2f)|^2d\gamma_ndt=2\sum_{i,j=1}^n\int_0^\infty e^{-2t}(1-e^{-2t})\int_{\R^n}\big(P_t(\partial^2_{ij}f)\big)^2d\gamma_ndt,
\]

\noindent Then, it is enough to bound 

\[
I=2\sum_{i,j=1}^n\int_0^\infty e^{-2t}(1-e^{-2t})\|P_t(\partial_{ij}f)\|_2^2dt.
\]

\noindent To this task, use the hypercontractive property \eqref{thm.hypercontracvite.ou} of  $(P_t)_{t\geq 0}$. Namely,  for any function smooth $g\,:\R^n\,\to\R$, 

\[
\|P_t(g)\|_2^2\leq \|g\|_{1+e^{-2t}}^2,\quad t\geq 0.
\]

\noindent with $g=\partial_{ij}f$, for any $i,j=1,\ldots,n$ and follow the exact same estimates that has been used after inequality \eqref{eq.hypercontractive.estimates}. We leave the details to the reader.
\end{proof}

\noindent Similarly, as the discrete case, it is possible to extend Theorem \ref{thm.talagrand.ordre2.gaussien} at higher order. Notice again that the second term (with the logarithmic factor) can be seen as the remainder term of Taylor's expansion of the variance. 
\begin{thm}
Let  $f\,:\R^n\to\R$ be such that $f\in C^p(\R^n)$ for some $p\geq 1$ and all its partial derivatives (up to order $p$) belong to the space $L^2(\gamma_n)$. Then, for any $p\geq 1$, we have

\[
{\rm Var}_{\gamma_n}(f)\leq \sum_{k=1}^p\frac{1}{k!}\bigg|\int_{\R^n}\nabla^kfd\gamma_n\bigg|^2+C\sum_{i_1,\ldots,i_{p+1}=1}^n\frac{\|\partial_{i_1,\ldots,i_{p+1}}f\|_2^2}{\bigg[1+\log\bigg(\frac{\|\partial_{i_1,\ldots,i_{p+1}}f\|_2}{\|\partial_{i_1,\ldots,i_{p+1}}f\|_1}\bigg)\bigg]^{p+1}}
\]
with $C>0$ a numerical constant.






\end{thm}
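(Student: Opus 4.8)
\medskip
\noindent\textbf{Proof proposal.} The plan is to feed the representation formula \eqref{eq.developpement.variance.taylor} of Theorem \ref{thm.developpement.variance.taylor} into the hypercontractive scheme already used for the order-one case (Theorem \ref{thm.talagrand.ordre2.gaussien}). Applying Theorem \ref{thm.developpement.variance.taylor} at level $p$ — which is licit precisely when $f\in\mathcal{C}^{p+1}(\R^n)$ and all partial derivatives of $f$ up to order $p+1$ lie in $L^2(\gamma_n)$, the smoothness under which the statement is to be read — gives
\[
{\rm Var}_{\gamma_n}(f)=\sum_{k=1}^p\frac{1}{k!}\bigg|\int_{\R^n}\nabla^kfd\gamma_n\bigg|^2+\frac{2}{p!}\int_0^\infty e^{-2t}\big(1-e^{-2t}\big)^p \int_{\R^n}\big|P_t(\nabla^{p+1}f)\big|^2d\gamma_ndt.
\]
The first sum is exactly the Taylor part of the claimed inequality, so it only remains to bound the remainder term
\[
R:=\frac{2}{p!}\int_0^\infty e^{-2t}\big(1-e^{-2t}\big)^p \int_{\R^n}\big|P_t(\nabla^{p+1}f)\big|^2d\gamma_ndt.
\]

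I would first expand the Euclidean norm over multi-indices, $|P_t(\nabla^{p+1}f)|^2=\sum_{i_1,\ldots,i_{p+1}=1}^n\big(P_t(\partial_{i_1,\ldots,i_{p+1}}f)\big)^2$, so that
\[
R=\frac{2}{p!}\sum_{i_1,\ldots,i_{p+1}=1}^n\int_0^\infty e^{-2t}\big(1-e^{-2t}\big)^p\big\|P_t(\partial_{i_1,\ldots,i_{p+1}}f)\big\|_2^2\,dt,
\]
and then apply Nelson's hypercontractivity (Theorem \ref{thm.hypercontracvite.ou}) with exponent $q=2$, that is $\|P_t(g)\|_2^2\leq\|g\|_{1+e^{-2t}}^2$, to $g=\partial_{i_1,\ldots,i_{p+1}}f$. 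The change of variables $v=1+e^{-2t}$ (so that $e^{-2t}=v-1$, $1-e^{-2t}=2-v$, $dt=-\frac{dv}{2(v-1)}$) turns the $t$-integral into $\tfrac12\int_1^2(2-v)^p\|g\|_v^2\,dv$, whence
\[
R\leq\frac{1}{p!}\sum_{i_1,\ldots,i_{p+1}=1}^n\int_1^2(2-v)^p\big\|\partial_{i_1,\ldots,i_{p+1}}f\big\|_v^2\,dv.
\]

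From this point on the argument is the one already sketched after \eqref{eq.hypercontractive.estimates}. H\"older's inequality gives $\|g\|_v\leq\|g\|_1^{\theta}\|g\|_2^{1-\theta}$ with $\theta=\theta(v)$ determined by $\frac1v=\frac\theta1+\frac{1-\theta}2$, i.e. $\theta=\frac{2-v}{v}$; setting $\alpha=\|g\|_1/\|g\|_2\leq1$ (which holds since $\gamma_n$ is a probability measure) and substituting $u=2-v$ reduces the inner integral to
\[
\int_1^2(2-v)^p\|g\|_v^2\,dv\leq\|g\|_2^2\int_0^1u^p e^{-\frac{2u}{2-u}\log(1/\alpha)}\,du.
\]
Since $\frac{2u}{2-u}\geq u$ on $[0,1]$, with $\lambda:=\log(1/\alpha)\geq0$ one has $\int_0^1u^pe^{-\frac{2u}{2-u}\lambda}\,du\leq\int_0^1u^pe^{-u\lambda}\,du$; combining the trivial estimate $\int_0^1u^p\,du=\frac1{p+1}$ (used when $\lambda\leq1$) with $\int_0^1u^pe^{-u\lambda}\,du\leq\int_0^\infty u^pe^{-u\lambda}\,du=\frac{p!}{\lambda^{p+1}}$ (used when $\lambda\geq1$) yields $\int_0^1u^pe^{-\frac{2u}{2-u}\lambda}\,du\leq\frac{C_p}{(1+\lambda)^{p+1}}$ for a constant $C_p$ depending only on $p$. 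Substituting back gives
\[
R\leq C_p'\sum_{i_1,\ldots,i_{p+1}=1}^n\frac{\big\|\partial_{i_1,\ldots,i_{p+1}}f\big\|_2^2}{\Big[1+\log\big(\|\partial_{i_1,\ldots,i_{p+1}}f\|_2/\|\partial_{i_1,\ldots,i_{p+1}}f\|_1\big)\Big]^{p+1}},
\]
and adding the Taylor part yields the claim (with $C=C(p)$).

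The changes of variables and H\"older's inequality are routine; the only genuinely delicate — though still elementary — point is the final estimate $\int_0^1u^pe^{-\frac{2u}{2-u}\lambda}\,du\leq C_p(1+\lambda)^{-(p+1)}$, which is what forces the constant to depend on $p$ and which I would prove via the case split $\lambda\leq1$ versus $\lambda\geq1$ indicated above. A secondary point is the bookkeeping of the smoothness hypothesis: to apply \eqref{eq.developpement.variance.taylor} at level $p$ one really needs $f\in\mathcal{C}^{p+1}(\R^n)$ with all partials up to order $p+1$ in $L^2(\gamma_n)$, so the statement should be understood under that (marginally stronger) assumption.
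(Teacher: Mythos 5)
Your proof is correct and follows precisely the route the paper sketches but does not carry out (variance representation at level $p$, expansion over multi-indices, hypercontractivity, H\"older interpolation, and an elementary integral estimate); the paper says only that the $p=1$ argument of Theorem~\ref{thm.talagrand.ordre2.gaussien} extends. Two remarks. Your observation about the hypothesis is a good catch: the conclusion involves derivatives of order $p+1$, so one really needs $f\in\mathcal C^{p+1}(\R^n)$ with all partials up to order $p+1$ in $L^2(\gamma_n)$ (this is also exactly what the constraint $1\leq p\leq m-1$ in Theorem~\ref{thm.developpement.variance.taylor} requires); the hypothesis as printed must be a typo. Second, you conclude $C=C(p)$, but your case split $\lambda\leq1$ versus $\lambda\geq1$ is a little wasteful: since $e^{-u\lambda}=e^{u}e^{-u(1+\lambda)}\leq e\,e^{-u(1+\lambda)}$ for $u\in[0,1]$ and $\lambda\geq0$, one has
\[
\frac{1}{p!}\int_0^1 u^p e^{-\frac{2u}{2-u}\lambda}\,du\leq \frac{1}{p!}\int_0^1 u^p e^{-u\lambda}\,du\leq \frac{e}{p!}\int_0^\infty u^p e^{-u(1+\lambda)}\,du=\frac{e}{(1+\lambda)^{p+1}},
\]
so after the $\tfrac{2}{p!}$ normalization and the factor $\tfrac12$ from the change of variables (which, as you note, cancel) the constant may be taken $C=e$, uniformly in $p$ — consistent with the paper's claim of a ``numerical constant''.
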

\begin{rem}
Observe that the sum $\sum_{k=1}^p\frac{1}{k!}\bigg|\int_{\R^n}\nabla^kfd\gamma_n\bigg|^2$ is precisely the beginning of the expansion of a function $f\in L^2(\gamma_n)$ along the Hermite's polynomials basis.
\end{rem}

\section{Further comments and remarks}\label{five}

To conclude this note, we would like to make some remarks about the potential extension of our work.

\subsection{Potential extensions}
Let us start with the discrete cube.
\subsubsection{Biased cube}
It is possible to equip the discrete cube $\{-1,1\}^n$ with a biased measure  $\nu_p^n=(p\delta_1+q\delta_{-1})^{\otimes n}$ with $p\in[0,1]$ and $q+p=1$. This measure also satisfied a Poincar\'e and logarithmic Sobolev inequalities (cf. \cite{Odo, CoLed}). $\nu_p^n$ is also the invariant measure of  an hypercontractive and ergodic semigroup $(T_t^p)_{t\geq 0}$. It is then obvious that our results can be immediately extended to such setting. However, some care has to be taken with the constant involved in the proof : some of them will depend on the logarithmic Sobolev constant $pq\frac{\log p-\log q}{p-q},\,p\neq q$ of $\nu_p^n$.\\

The study of the dependence in $p$ of the measure $\nu_p^n$ has been proven useful (cf. \cite{BLM, TalL1L2} for more details) concerning sharp threshold for monotone graph. For instance, in \cite{FrieKal}, the authors proved the following

\begin{thm}[Friedgut-Kalai]\label{thm.friedgut.kalai}
For every symmetric monotone set $A$ and every $\epsilon>0$, if $\nu_p^n(A)>\epsilon$ then $\nu_q^n(A)>1-\epsilon$ for $q=p+c_1\log(1/2\epsilon)/\log n$ where $c_1$ is an absolute constant.
\end{thm}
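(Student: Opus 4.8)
\textbf{Plan of proof (of Theorem~\ref{thm.friedgut.kalai}).}
The statement is the classical Friedgut--Kalai sharp threshold theorem, and the natural route is to combine the KKL-type bound of Theorem~\ref{thm.kkl} (in its biased version for $\nu_p^n$) with Margulis--Russo's formula for monotone sets. First I would record the Margulis--Russo identity: for a monotone set $A\subset\{-1,1\}^n$,
\[
\frac{d}{dp}\,\nu_p^n(A)=\sum_{i=1}^n I_i^{(p)}(\ind_A),
\]
where $I_i^{(p)}$ denotes the influence of the $i$-th coordinate computed under the biased measure $\nu_p^n$. By the symmetry hypothesis on $A$ (invariance under a transitive group of permutations of the coordinates), all the influences $I_i^{(p)}(\ind_A)$ are equal, so the right-hand side is exactly $n\,I_1^{(p)}(\ind_A)$.

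Next I would invoke the biased version of the KKL theorem (valid since $\nu_p^n$ is the invariant measure of a hypercontractive ergodic semigroup $(T_t^p)_{t\geq0}$, as recalled in the subsection on the biased cube): there is an absolute constant $c>0$ with
\[
\max_i I_i^{(p)}(\ind_A)\;\geq\; c\,\mathrm{Var}_{\nu_p^n}(\ind_A)\,\frac{\log n}{n}.
\]
One has to be a little careful here, because the constant in the biased KKL bound depends on the log-Sobolev constant of $\nu_p^n$, which degenerates as $p\to0$ or $p\to1$; but in the threshold window of interest $p$ stays bounded away from $0$ and $1$, so the constant can be taken absolute there. Combining with Margulis--Russo and symmetry,
\[
\frac{d}{dp}\,\nu_p^n(A)\;\geq\; c\,\log n\cdot \mathrm{Var}_{\nu_p^n}(\ind_A)\;=\;c\,\log n\cdot \nu_p^n(A)\big(1-\nu_p^n(A)\big).
\]

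Finally I would integrate this differential inequality. Write $g(p)=\nu_p^n(A)\in(0,1)$; then $g'/\big(g(1-g)\big)\geq c\log n$, i.e. $\frac{d}{dp}\log\!\big(g/(1-g)\big)\geq c\log n$. Integrating from $p$ to $q>p$ gives
\[
\log\frac{g(q)}{1-g(q)}-\log\frac{g(p)}{1-g(p)}\;\geq\; c(q-p)\log n .
\]
If $g(p)>\epsilon$ then $\log\frac{g(p)}{1-g(p)}>\log\frac{\epsilon}{1-\epsilon}\geq \log(\epsilon)\geq-\log(1/\epsilon)$, and to force $g(q)>1-\epsilon$ it suffices that $\log\frac{g(q)}{1-g(q)}>\log\frac{1-\epsilon}{\epsilon}\geq \log(1/2\epsilon)$ say; a crude bookkeeping of the two logarithmic terms shows this holds as soon as $c(q-p)\log n\geq 2\log(1/2\epsilon)$, i.e. for $q=p+c_1\log(1/2\epsilon)/\log n$ with $c_1=2/c$ an absolute constant, which is the claimed conclusion.

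\textbf{Main obstacle.} The delicate point is the dependence of the KKL constant on $p$ through the log-Sobolev constant $pq(\log p-\log q)/(p-q)$ of $\nu_p^n$; one must argue that in the relevant range ($g(p)$ bounded below and $q$ close to $p$, hence $p$ bounded away from $0$ and $1$ once the threshold has begun) this constant is bounded, so that $c_1$ can indeed be taken absolute. The remaining steps -- Margulis--Russo, the use of symmetry to equalise influences, and integrating the logistic differential inequality -- are routine.
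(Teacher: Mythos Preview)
The paper does not actually prove Theorem~\ref{thm.friedgut.kalai}; it is quoted from \cite{FrieKal} and the only indication of a proof is the single sentence ``The proof of Theorem~\ref{thm.friedgut.kalai} relies on the so-called Russo--Margulis's Lemma (cf.\ \cite{BLM, FrieKal}) and Kahn--Kalai--Linial Theorem~\ref{thm.kkl}.'' Your proposal is precisely a fleshing out of that classical route --- Margulis--Russo, symmetry to equalise influences, biased KKL, then integration of the logistic differential inequality --- so it matches the paper's indicated approach exactly.

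One caveat on your handling of the ``main obstacle'': the claim that ``$p$ stays bounded away from $0$ and $1$ once the threshold has begun'' is not quite right as stated, since the theorem is asserted for \emph{every} $p$ with an absolute $c_1$. In the original Friedgut--Kalai argument (and in Talagrand's $L^1$--$L^2$ version) the biased KKL/Talagrand inequality is used in a form whose constant, while $p$-dependent, behaves well enough (roughly like $\log(1/p(1-p))$) that after combining with Margulis--Russo the resulting differential inequality can still be integrated uniformly in $p$; alternatively one restricts to the symmetric window and treats the endpoints separately. Your sketch flags the right difficulty but the resolution you give would need to be tightened to actually deliver an absolute $c_1$. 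Aside from that, the argument is correct and standard.
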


They also asked if the following holds (cf. \cite{FrieKal} for more details)

\begin{conjec}\label{conjec.friedgut.kalai}
Let $P$ be any monotone property of graphs on $n$ vertices and $\epsilon>0$. If
$\nu_p^n(P)>\epsilon$, then $\nu_q^n(P)>1-\epsilon$ for $q=p+c\log(1/2\epsilon)/\log^2 n$.
\end{conjec}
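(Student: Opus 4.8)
The plan is to run the Friedgut--Kalai argument behind Theorem~\ref{thm.friedgut.kalai}, but with its first-order input (the KKL bound) replaced by the second-order bound of Corollary~\ref{cor.kkl.ordre2}. Recall that Theorem~\ref{thm.friedgut.kalai} is obtained from the Margulis--Russo identity: for a monotone set $A$,
\[
\frac{d}{dp}\,\nu_p^n(A)=\sum_{i=1}^n I_i^{(p)}(A),
\]
so that for a \emph{symmetric} monotone $A$ all the $I_i^{(p)}(A)$ coincide and KKL forces $\frac{d}{dp}\nu_p^n(A)\geq c\,\nu_p^n(A)\big(1-\nu_p^n(A)\big)\log n$; integrating this differential inequality from level $\epsilon$ to level $1-\epsilon$ yields a window of width $O\big(\log(1/2\epsilon)/\log n\big)$. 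For the conjecture one works on the cube of dimension $N=\binom{n}{2}$ (the edge slots of a graph on $n$ vertices), so $\log N$ and $\log n$ are of the same order, and the goal is to gain an extra factor $\log n$ in this window. The first step is therefore to transport Theorem~\ref{thm.talagrand.ordre.superieur.cube.discret.ordre2} and Corollary~\ref{cor.kkl.ordre2} to the biased cube $(\{-1,1\}^N,\nu_p^N)$ as indicated in Section~\ref{five}: the semigroup $(T_t^p)_{t\geq 0}$ is hypercontractive, the interpolation scheme of Section~\ref{three} goes through verbatim, and the only change is that the numerical constant now depends on the log-Sobolev constant $pq(\log p-\log q)/(p-q)$ of $\nu_p^N$.

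Applying this biased form of Corollary~\ref{cor.kkl.ordre2} to a symmetric monotone $f=\ind_A$ then gives, for each $p$ in the threshold interval, one of two alternatives. In the first, a single coordinate has influence at least $c\big({\rm Var}_{\nu_p^N}(f)/N\big)^{1/(1+\eta)}$; by symmetry all $N$ coordinates do, whence $\frac{d}{dp}\nu_p^N(A)=\sum_i I_i^{(p)}(A)\geq c\,N^{\eta/(1+\eta)}\big(\nu_p^N(A)(1-\nu_p^N(A))\big)^{1/(1+\eta)}$, and since $1/(1+\eta)<1$ the separated integral converges at the endpoints, so integrating produces a window of \emph{polynomial} width $O\big(N^{-\eta/(1+\eta)}\big)$, far sharper than conjectured. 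In the second alternative some pair $(i,j)$ has $I_{(i,j)}^{(p)}(A)\geq c\,{\rm Var}_{\nu_p^N}(f)(\log n/N)^2$; since the symmetry group has boundedly many orbits on ordered pairs of distinct edge slots, this forces the total second-order influence $T_2:=\sum_{i\neq j}I_{(i,j)}^{(p)}(A)\geq c'\,{\rm Var}_{\nu_p^N}(f)(\log n)^2$. The remaining task is to convert a lower bound on $T_2$ into a lower bound on the total first-order influence $T_1:=\sum_i I_i^{(p)}(A)=\frac{d}{dp}\nu_p^N(A)$, after which the same integration as above, now with the gain $(\log n)^2$, closes the argument.

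The main obstacle is precisely this last conversion, and it is the reason the conjecture is still open. Writing $|D_jf|$ for the indicator of the pivotal set $B_j$ of coordinate $j$ (so $\nu_p^N(B_j)=I_j^{(p)}(A)$), the triangle inequality gives only $\|D_i(D_jf)\|_1\leq C_p\,I_j^{(p)}(A)$ for every $i$, hence $T_2\leq C_pN\,T_1$, which is hopelessly weak — it would yield $T_1\gtrsim {\rm Var}_{\nu_p^N}(f)(\log n)^2/N$, worse even than KKL. A genuine use of the symmetry is needed: morally one wants the Bourgain--Kalai ``junta'' mechanism, to the effect that a symmetric function cannot spread its second-order influence across essentially all $\binom{N}{2}$ pairs without concentrating its first-order influence, and it is exactly the uniform (in $n$) control of this step that remains elusive. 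Two side issues must also be handled: the constant in the biased Theorem~\ref{thm.talagrand.ordre.superieur.cube.discret.ordre2} degrades like the log-Sobolev constant of $\nu_p^N$ as $p\to0,1$, so one must check it does not erode the $(\log n)^2$ gain over the whole threshold interval; and the dual route through a second Margulis--Russo identity, $\frac{d^2}{dp^2}\nu_p^N(A)=(2pq)^{-2}\sum_{i\neq j}{\rm Cov}_{\nu_p^N}(fx_i,x_j)+(\text{lower order})$, fails for a different reason, namely that the degree-two coefficients appearing there are not sign-controlled by monotonicity, so a size bound on $T_2$ does not bound $\big|\frac{d^2}{dp^2}\nu_p^N(A)\big|$ without additional cancellation control. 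Thus the second-order Talagrand inequality of this note supplies a plausible new ingredient toward Conjecture~\ref{conjec.friedgut.kalai} rather than a complete proof.
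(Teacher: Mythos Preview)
The statement is labelled \texttt{conjec} for a reason: the paper does \emph{not} prove it. Conjecture~\ref{conjec.friedgut.kalai} is presented in Section~\ref{five} as an open problem, and the paper's entire discussion of it amounts to two sentences: that a second-order Russo--Margulis lemma can be obtained by elementary calculus, but that (citing a private communication of Rossignol) ``the extension of Kahn--Kalai--Linial's theorem at order two is too rough to prove the conjecture. Maybe one should add further arguments.'' There is therefore no proof in the paper to compare your attempt against.

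Your proposal is, by your own admission in its last sentence, also not a proof. What you have written is a careful diagnosis of \emph{why} the naive approach fails, and that diagnosis is entirely compatible with --- indeed more detailed than --- the paper's brief remark. You correctly locate the gap at the step where a lower bound on $T_2=\sum_{i\neq j}I_{(i,j)}^{(p)}(A)$ must be converted into a lower bound on $T_1=\sum_i I_i^{(p)}(A)$, and you correctly observe that the trivial bound $T_2\leq C_pN\,T_1$ gives back something weaker than KKL. Your comments on the dependence of the constants on the log-Sobolev constant of $\nu_p^N$ and on the lack of sign control in the second derivative of $p\mapsto\nu_p^N(A)$ are also pertinent and go beyond what the paper says. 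But none of this constitutes a proof of the conjecture, and it would be misleading to present it as one: the honest status is exactly what both you and the paper conclude --- the second-order inequality is a suggestive ingredient, not a resolution.
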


The proof fo Theorem \ref{thm.friedgut.kalai} relies on the so-called Russo-Margulis's Lemma (cf. \cite{BLM, FrieKal}) and Kahn-Kalai-Linial Theorem \ref{thm.kkl}. It is then natural to ask if Talagrand inequalities at order two (and its consequences in terms of influences) for the biased cube can be used to prove Conjecture \ref{conjec.friedgut.kalai} ?\\

As a matter of fact, it can be shown (with elementary calculus) that Russo-Margulis's Lemma can be extended at order two. However it seems (cf. \cite{Ross}) that the extension of Kahn-Kalai-Linial's theorem at order two is too rough to prove the conjecture. Maybe one should add further arguments.

\subsubsection{General setting}
As another extension of our work, it is possible to consider the general framework of Cordero-Erausquin and Ledoux's article \cite{CoLed}. Indeed, as they have investigated in their paper, the crucial point of Talagrand inequality \eqref{eq.talagrand1} is the decomposition of the Dirichlet energy along directions which commutes with the semigroup (cf. \cite{CoLed} for more details) together with some hypercontractive estimates. Even if this extension is straightforward, we did not want to get into this level of generality for the sake of clarity of our exposition. However, we want to emphasize that $(C_n, \nu_p^n)$ and $(\R^n, \gamma_n)$ (and more general measures) fit this setting.

\subsection{Links with concentration of measure}

As far as we are concerned, it seems that our work has some connection with some recent results of Concentration of Measure Theory. General references for this topic are \cite{Led, BLM}.\\

\noindent In a Gaussian setting, the Concentration of Measure phenomenon is usually stated as follow.

\begin{thm}[Borell-Sudakov-Tsirel'son-Ibragimov]\label{thm.borel}
Let $f\,:\,\R^n\to \R$ be a Lipschitz function and $X$ a standard Gaussian vector in $\R^n$. Then, the following holds

\begin{equation}\label{eq.borell}
\p\bigg( \big|f(X)-\E[f(X)]|\big|\geq t\bigg)\leq 2e^{-t^2/2\|f\|_{Lip}^2}\quad \text{for any}\quad t\geq 0.
\end{equation}
where $\|f\|_{Lip}=\sup\bigg\{\frac{|f(x)-f(y)|}{|x-y|},\, x,y\in\R^n,\, x\neq y\bigg\}$.
 \end{thm}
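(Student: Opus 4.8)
The plan is to deduce \eqref{eq.borell} from the sub-Gaussian Laplace transform bound
\[
\E\big[e^{\lambda(f(X)-\E[f(X)])}\big]\leq e^{\lambda^2\|f\|_{Lip}^2/2}\qquad\text{for all }\lambda\in\R,
\]
from which the conclusion follows by the standard Chernoff argument: Markov's inequality applied to $e^{\lambda(f(X)-\E[f(X)])}$ gives, for $\lambda>0$, $\p\big(f(X)-\E[f(X)]\geq t\big)\leq e^{-\lambda t+\lambda^2\|f\|_{Lip}^2/2}$; optimizing in $\lambda$ (namely $\lambda=t/\|f\|_{Lip}^2$) produces $e^{-t^2/2\|f\|_{Lip}^2}$, and applying the same estimate to $-f$ together with a union bound yields the factor $2$. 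By mollification and truncation it suffices to prove the Laplace transform bound when $f$ is smooth and bounded with bounded partial derivatives and $|\nabla f|\leq\|f\|_{Lip}$ everywhere; a general Lipschitz $f$ (which is automatically in $L^2(\gamma_n)$, with $|\nabla f|\leq\|f\|_{Lip}$ $\gamma_n$-a.e.) is then recovered by passing to the limit and using Fatou's lemma, noting that mollification and $1$-Lipschitz clipping do not increase the Lipschitz constant. Normalize so that $\E[f(X)]=0$.

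The core of the argument uses the Ornstein--Uhlenbeck semigroup $(P_t)_{t\geq0}$ of Section~\ref{four}. Writing $h-\E[h]=-\int_0^\infty\tfrac{d}{dt}P_t h\,dt=\int_0^\infty(-L)P_t h\,dt$ and applying the integration by parts formula \eqref{eq.ipp.ou} termwise together with the commutation relation \eqref{eq.commutation.ou}, $\nabla P_t=e^{-t}P_t\nabla$, one obtains the covariance representation
\[
{\rm Cov}_{\gamma_n}(f,h)=\int_0^\infty e^{-t}\int_{\R^n}\nabla f\cdot P_t(\nabla h)\,d\gamma_n\,dt.
\]
I would apply this with $h=e^{\lambda f}$, so that $\nabla h=\lambda e^{\lambda f}\nabla f$. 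The key pointwise bound is that, by the Cauchy--Schwarz inequality in $\R^n$, the pointwise estimate $|\nabla f|\leq\|f\|_{Lip}$, Jensen's inequality for the probability kernel defining $P_t$ (so that $|P_t v|\leq P_t|v|$ for vector-valued $v$), and the positivity of $P_t$,
\[
\big|\nabla f\cdot P_t(\lambda e^{\lambda f}\nabla f)\big|\leq\|f\|_{Lip}\,P_t\big(|\lambda|\,e^{\lambda f}|\nabla f|\big)\leq|\lambda|\,\|f\|_{Lip}^2\,P_t\big(e^{\lambda f}\big).
\]
Integrating against $\gamma_n$ and using invariance of $\gamma_n$ under $P_t$ gives $\int_{\R^n}\nabla f\cdot P_t(\nabla h)\,d\gamma_n\leq|\lambda|\,\|f\|_{Lip}^2\,\E[e^{\lambda f(X)}]$; since $\int_0^\infty e^{-t}\,dt=1$, this yields ${\rm Cov}_{\gamma_n}(f,e^{\lambda f})\leq|\lambda|\,\|f\|_{Lip}^2\,\E[e^{\lambda f(X)}]$.

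Finally, set $F(\lambda)=\E[e^{\lambda f(X)}]$. Since $\E[f(X)]=0$ we have ${\rm Cov}_{\gamma_n}(f,e^{\lambda f})=\E[f(X)e^{\lambda f(X)}]=F'(\lambda)$, so the previous inequality reads $F'(\lambda)\leq|\lambda|\,\|f\|_{Lip}^2\,F(\lambda)$, i.e. $\big(\log F\big)'(\lambda)\leq|\lambda|\,\|f\|_{Lip}^2$. Integrating from $0$ to $\lambda$ and using $F(0)=1$ gives $\log F(\lambda)\leq\lambda^2\|f\|_{Lip}^2/2$, which is exactly the desired Laplace transform bound, and hence \eqref{eq.borell}. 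I expect the only genuinely delicate point to be the reduction step: justifying the covariance representation and the differentiation under the integral sign requires the preliminary smoothing/truncation (on smooth bounded $f$ with bounded derivatives every integral in sight converges absolutely), after which the semigroup estimate itself is elementary and, notably, uses only the Markov property, positivity, invariance and the commutation formula \eqref{eq.commutation.ou} — not hypercontractivity. An equivalent alternative is the Herbst argument, in which the logarithmic Sobolev inequality for $\gamma_n$ applied to $e^{\lambda f/2}$ yields the same differential inequality for $F$; I prefer the semigroup proof above since it relies only on tools already developed in Section~\ref{four}.
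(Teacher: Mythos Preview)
Your argument is correct: the covariance representation along the Ornstein--Uhlenbeck semigroup together with the commutation formula \eqref{eq.commutation.ou}, positivity, and invariance yields the differential inequality $(\log F)'(\lambda)\leq|\lambda|\,\|f\|_{Lip}^2$, and the Chernoff step is standard. The reduction to smooth bounded $f$ is indeed the only place requiring care, and you have handled it appropriately.

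However, there is nothing to compare against: the paper does \emph{not} prove Theorem~\ref{thm.borel}. It is stated in Section~\ref{five} purely as classical background (with references to \cite{LT,PaouVal}) in order to motivate the discussion of superconcentration and the result of Paouris--Valettas. So your proof is not an alternative to the paper's proof --- it is simply a proof of a result the paper quotes without argument. That said, your choice to use only the semigroup tools already assembled in Section~\ref{four} (rather than invoking the logarithmic Sobolev inequality and the Herbst argument, which the paper has not developed) is a sensible one in context.
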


This result is known to be sharp for the large deviation regime (cf. \cite{LT,PaouVal}). Nevertheless, it is not the case for the small deviation regime as it can been seen on the Lipschitz function $f(x)=\max_{i=1,\ldots,n}x_i$.\\

\subsubsection{Superconcentration inequalities}
 In their article \cite{PaouVal2}, Paouris and Valettas, proved that Talagrand inequality (in a Gaussian setting) can be used to precise inequality \eqref{eq.borell} in the small deviation regime. More precisely, they proved the following

\begin{prop}[Paouris-Valettas]\label{prop.paou.valettas}
Let $f\,:\,\R^n\to\R$ be a Lipschitz function with

\[
|f(x)-f(y)|\leq b\|x-y\|_2,\quad |f(x)-f(y)|\leq a\|x-y\|_{\infty},\quad x,y\in\R^n
\]

\noindent and $\|\partial_i f\|_1\leq A$ for all $i\in\{1,\ldots,n\}$. Then, if we set $F=f-\E_{\gamma_n}[f]$,  for all $\lambda>0$ we have

\[
{\rm Var}_{\gamma_n}(e^{\lambda F})\leq \frac{C\lambda^2b^2}{\log \bigg(e+\frac{b^2}{aA}\bigg)}\E_{\gamma_n}\big[e^{2\lambda F}\big].
\]

\noindent In particular, for any $t\geq 0$,

\begin{equation}\label{eq.paou.valettas}
\p\bigg( \big|f(X)-\E[f(X)]|\big|\geq t\bigg)\leq 4 \exp\bigg(-c\max\bigg\{\frac{t^2}{b^2},\frac{t}{b}\sqrt{\log \big(e+\frac{b^2}{aA}\big)}\bigg\}\bigg)
\end{equation}

\noindent where $C,c> 0$ are universal constants.
\end{prop}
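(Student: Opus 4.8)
The plan is to obtain the variance estimate from the Gaussian counterpart of Talagrand's $L^1$--$L^2$ inequality \eqref{eq.talagrand1} applied to $g=e^{\lambda F}$, and then to pass from it to the tail bound \eqref{eq.paou.valettas} by a Herbst-type integration. First I would note that, $f$ being smooth and $b$-Lipschitz, $g=e^{\lambda F}$ is smooth with $\partial_i g=\lambda(\partial_i f)g$, so with $S:=\E_{\gamma_n}\!\big[e^{2\lambda F}\big]=\|g\|_2^2$ the Gaussian version of \eqref{eq.talagrand1} reads
\[
{\rm Var}_{\gamma_n}\!\big(e^{\lambda F}\big)\ \le\ C\sum_{i=1}^n\frac{\|\partial_i g\|_2^2}{1+\log\!\big(\|\partial_i g\|_2/\|\partial_i g\|_1\big)} .
\]
For the numerator, $|\nabla f|\le b$ almost everywhere gives $\sum_{i=1}^n\|\partial_i g\|_2^2=\lambda^2\int_{\R^n}|\nabla f|^2e^{2\lambda F}\,d\gamma_n\le\lambda^2b^2S$.

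The heart of the matter is to extract the logarithmic factor, and this is where the two remaining hypotheses enter. From $\|\partial_i f\|_\infty\le a$ (a consequence of the $\ell^\infty$-Lipschitz bound) and $\|\partial_i f\|_1\le A$, Hölder's inequality against $e^{\lambda F}$, resp. $e^{2\lambda F}$, produces estimates of the type $\|\partial_i g\|_1\le\lambda\sqrt{aA}\,\|g\|_2$ and $\|\partial_i g\|_1^2\le\lambda AS^{1/2}\|\partial_i g\|_2$, which force $\|\partial_i g\|_2/\|\partial_i g\|_1$ to be large on exactly the coordinates carrying the bulk of $\sum_i\|\partial_i g\|_2^2$; moreover $\sum_i\|\partial_i g\|_1=\lambda\int_{\R^n}\|\nabla f\|_1e^{\lambda F}\,d\gamma_n\le\lambda a\,\E_{\gamma_n}[e^{\lambda F}]$ limits how much the gradient energy can be diluted over many coordinates with $L^2/L^1$ ratio close to $1$. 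Rearranging the Talagrand sum accordingly — splitting the indices $i$ according to the size of $\|\partial_i g\|_2$ and estimating each block separately — should yield
\[
{\rm Var}_{\gamma_n}\!\big(e^{\lambda F}\big)\ \le\ \frac{C\lambda^2b^2}{\log\!\big(e+\tfrac{b^2}{aA}\big)}\,\E_{\gamma_n}\!\big[e^{2\lambda F}\big].
\]

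To pass to the tails, set $L=\log(e+b^2/(aA))$ and $\Lambda(\lambda)=\log\E_{\gamma_n}[e^{\lambda F}]$, so $\Lambda(0)=0$ and $\Lambda'(0)=\E_{\gamma_n}[F]=0$. The inequality above is $1-e^{2\Lambda(\lambda)-\Lambda(2\lambda)}\le C\lambda^2b^2/L$, whence, as long as $C\lambda^2b^2/L\le 1/2$, one gets $\Lambda(2\lambda)-2\Lambda(\lambda)\le C'\lambda^2b^2/L$. Dividing by $2\lambda$ and telescoping over the dyadic scales $\lambda,\lambda/2,\lambda/4,\dots$ (using $\Lambda(\mu)/\mu\to\Lambda'(0)=0$ as $\mu\to 0$) gives $\Lambda(\lambda)\le C''\lambda^2b^2/L$ for every $\lambda$ with $\lambda\lesssim\sqrt{L}/b$. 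Applying this to $F$ and to $-F$, the Chernoff bound $\p(|F|\ge t)\le 2\inf_\lambda e^{C''\lambda^2b^2/L-\lambda t}$, optimised over the admissible range — together with the Borell--Sudakov--Tsirel'son estimate of Theorem~\ref{thm.borel}, which already supplies the sub-Gaussian term $t^2/b^2$ — yields the two regimes of \eqref{eq.paou.valettas}, the boundary value $\lambda\simeq\sqrt{L}/b$ being responsible for the term $\tfrac{t}{b}\sqrt{\log(e+b^2/(aA))}$.

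The hard part is the middle step. A purely term-by-term estimate of the Talagrand sum only recovers the plain Talagrand/Poincar\'e inequality, with no logarithmic improvement; the gain $1/\log(e+b^2/(aA))$ genuinely requires using the three hypotheses at once — in particular that the $\ell^\infty$-Lipschitz constant $a$ prevents $\sum_i\|\partial_i g\|_2^2$ from being spread over too many coordinates on which $\|\partial_i g\|_2/\|\partial_i g\|_1$ is close to $1$ — and carrying out the corresponding reorganisation of the sum is the delicate analytic core of the argument of \cite{PaouVal2}.
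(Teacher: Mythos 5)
First, a point about scope: the paper does not prove this Proposition; it is quoted verbatim from Paouris--Valettas \cite{PaouVal2} as motivation, so there is no ``paper's own proof'' to compare against. I therefore assess your attempt on its own merits.

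Your overall framework --- the Gaussian Talagrand $L^1$--$L^2$ inequality applied to $g=e^{\lambda F}$, followed by a Herbst-type integration in which $\Lambda(\lambda)=\log\E[e^{\lambda F}]$ is controlled on the restricted range $\lambda\lesssim\sqrt{L}/b$ and the sub-Gaussian regime is supplied by Theorem~\ref{thm.borel} --- is the right one and the tail-bound half is correctly described. However, the middle step contains a genuine gap, and the estimates you actually write down are not strong enough to close it. With $V=\sum_i\|\partial_i g\|_2^2\le\lambda^2b^2 S$ and $W=\sum_i\|\partial_i g\|_1^2$, a split of the Talagrand sum at the threshold $\|\partial_i g\|_2/\|\partial_i g\|_1\gtrless(V/W)^{1/4}$ does give ${\rm Var}(g)\le C V/\log(e+V/W)$ (the low-ratio block contributes at most $\sqrt{VW}$, which is $\le CV/\log(e+V/W)$). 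But the crucial input is then the \emph{summed} $L^1$ bound $W\le\lambda^2 aA\,S$, obtained via $\|\partial_i g\|_1^2\le\lambda^2\|\partial_i f\|_1\int|\partial_i f|\,e^{2\lambda F}\,d\gamma_n\le\lambda^2A\int|\partial_i f|\,e^{2\lambda F}\,d\gamma_n$ and then summing and using $\sum_i|\partial_i f|=\|\nabla f\|_1\le a$ \emph{against the measure $e^{2\lambda F}d\gamma_n$}. You instead record only the per-index bound $\|\partial_i g\|_1\le\lambda\sqrt{aA}\,S^{1/2}$ and the $\ell^1$-sum $\sum_i\|\partial_i g\|_1\le\lambda a\,\E[e^{\lambda F}]$; combining these yields merely $W\le\lambda^2 a^{3/2}A^{1/2}S$, and since $A\le a$ the resulting logarithm $\log\big(e+b^2/(a^{3/2}A^{1/2})\big)$ is in general \emph{smaller} than the target $\log(e+b^2/(aA))$, so the variance bound you obtain is genuinely weaker. (Equivalently, and perhaps more in the spirit of the paper, one can bypass the black-box inequality entirely: start from ${\rm Var}(g)=2\int_0^\infty e^{-2t}\sum_i\|P_t(\partial_i g)\|_2^2\,dt$, use hypercontractivity plus $L^1$--$L^2$ interpolation, apply H\"older \emph{across the index $i$} to get $\sum_i\|\partial_i g\|_1^{2\theta}\|\partial_i g\|_2^{2(1-\theta)}\le W^\theta V^{1-\theta}$, and integrate $\int_0^1\kappa^u(1+u)^{-2}du\le(1\wedge\log(1/\kappa))^{-1}$ with $\kappa=aA/b^2$.) So: right road map, but the delicate analytic core you flag is exactly where your written estimates fall short, and the missing ingredient is the improved bound on $\sum_i\|\partial_i g\|_1^2$.
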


\begin{rem}
It is a simple matter to check that equation \eqref{eq.paou.valettas} is sharp (except for the left tail) for the function $f(x)=\max_{i=1,\ldots,n}x_i$. Such achievements are part of the Superconcentration phenomenon introduced by Chatterjee in \cite{Chatt1}. We also refer to \cite{KT, KT1, KT2} for recent results in this topic (in particular, the article \cite{KT} gives some kind of extension of Proposition \ref{prop.paou.valettas} for correlated Gaussian measures).
\end{rem}

Since Paouris and Valettas's work relies on Talagrand inequality \eqref{eq.talagrand1}, we wonder if Theorem \ref{thm.talagrand.ordre2.gaussien} can be of any help to precise any further the Concentration of Measure phenomenon for the Gaussian measure $\gamma_n$.

\subsubsection{Higher order of concentration of measure}
Recently, Bobkov, Gotze and Sambale wrote an article \cite{BobGoSam} about higher order of concentration inequalities. In particular, they studied sharpened forms of the Concentration of Measure phenomenon for functions typically centered at stochastic expansions (the so-called Hoeffdding decomposition) of order $d-1$ for any $d\in\N$. They obtained deviations for smooth functions of independent random variables under some probability measure $\nu$ satisfying a logarithmic Sobolev inequality. One of their main results involved some bounds of derivatives of order $d$. As a sample, they proved the following. \\

 As it is presented in \cite{BobGoSam}, some notations are needed. Given a function $f\in C^{d}(\R^n)$ we define $f^{(d)}$ to be the (hyper-) matrix whose entries

\[
f^{(d)}_{i_1\ldots i_d}(x) = \partial_{i_1\ldots i_d}f,\quad d=1,2,\ldots
\]

\noindent represent the $d$-fold (continuous) partial derivatives of $f$ at $x\in\R^n$. By considering $f^{(d)}(x)$ as a symmetric multilinear $d$-form, we define operator-type norms by

\[
|f^{(d)}(x)|_{Op}=\sup\{f^{(d)}(x)[v_1,\ldots,v_d]\,:\,|v_1|=\ldots=|v_d|=1\}
\]

\noindent For instance, $|f^{(1)}(x)|_{Op}$ is the Euclidean norm of the gradient $\nabla f(x)$, and $|f^{(2)}(x)|_{Op}$ is the operator norm of the Hessian $\nabla^2f(x)$. Furthermore, the following short-hand notation will be used

\[ 
\|f^{(d)}\|_{Op,p} =\bigg(\int_{\R^n}|f^{(d)}|^p_{Op}d\nu\bigg)^{1/p},\quad\text{for any}\quad p\in(0,+\infty].
\]

Now, we can state their result.
\begin{thm}[Bobkov-G\"otze-Sambale]\label{thm.bobkov.gotze.sambale}
 Let $\nu$ be a probability measure on $\R^n$ satisfying a logarithmic Sobolev inequality with constant $\sigma^2$ and let $f\,:\,\R^n\to \R$ be $C^{d}$-smooth function such that
 
 \[ 
 \int_{\R^n}fd\nu=0\quad \text{and}\quad \int_{\R^n}\partial_{i_1\ldots i_k}fd\nu=0
\]

\noindent for all $k = 1,\ldots, d-1$ and $1\leq i_1\leq \ldots\leq i_k\leq n$. Assume that

\[
\|f^{(d)}\|_{HS,2}\leq 1\quad \text{and}\quad \|f^{(d)}\|_{Op,\infty}\leq 1
\]
Then, there exists some universal constant $c > 0$ such that

\[
\int_{\R^n}\exp\bigg(\frac{c}{\sigma^2}|f|^{2/d}\bigg)d\nu\leq 2.
\]
\end{thm}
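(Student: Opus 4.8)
The plan is to convert the hypotheses into the family of polynomial moment bounds
\[
\Big(\int_{\R^n}|f|^{p}\,d\nu\Big)^{1/p}\ \le\ c_d\,\sigma^{d}\,p^{d/2},\qquad p\ge 2,
\]
with $c_d$ depending only on the (fixed) order $d$, and then to integrate these against the Taylor series of $\exp\!\big(c\sigma^{-2}|f|^{2/d}\big)$. The moment bounds are produced by iterating, $d$ times, the $L^p$ gradient estimate carried by a logarithmic Sobolev inequality, the successive differentiations being stopped by the two normalisations $\|f^{(d)}\|_{HS,2}\le1$ and $\|f^{(d)}\|_{Op,\infty}\le1$.

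Two inputs are needed. First, since $\nu$ satisfies a logarithmic Sobolev inequality with constant $\sigma^{2}$, it satisfies a Poincar\'e inequality with constant $\sigma^{2}$ and, more importantly, the Aida--Stroock type inequality: there is a universal $C$ such that for every smooth $g$ and every $p\ge2$,
\[
\Big\|\,g-\int_{\R^n} g\,d\nu\,\Big\|_{L^p(\nu)}\ \le\ C\,\sqrt{p}\,\sigma\,\big\|\,|\nabla g|\,\big\|_{L^p(\nu)} .
\]
Second, a pointwise differential bound for operator norms of derivative tensors: setting $h_k(x)=|f^{(k)}(x)|_{Op}$, the function $h_k$ is locally Lipschitz with $|\nabla h_k(x)|\le h_{k+1}(x)$ almost everywhere. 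Indeed $h_k(x)=\sup\{f^{(k)}(x)[v_1,\dots,v_k]:|v_1|=\dots=|v_k|=1\}$, each map $x\mapsto f^{(k)}(x)[v_1,\dots,v_k]$ has gradient of Euclidean length $\sup_{|w|=1}f^{(k+1)}(x)[v_1,\dots,v_k,w]\le|f^{(k+1)}(x)|_{Op}$, and the a.e. gradient of a supremum is dominated by the supremum of the gradients.

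Now I would set up the recursion. Applying the Aida--Stroock inequality to $g=f$ (using $\int f\,d\nu=0$) gives $\|f\|_{L^p(\nu)}\le C\sqrt p\,\sigma\,\|h_1\|_{L^p(\nu)}$, and applying it to $g=h_k$ together with $|\nabla h_k|\le h_{k+1}$ and $\int h_k\,d\nu\le\int|f^{(k)}|_{HS}\,d\nu\le\|f^{(k)}\|_{HS,2}$ gives, for $1\le k\le d-1$,
\[
\|h_k\|_{L^p(\nu)}\ \le\ \|f^{(k)}\|_{HS,2}+C\sqrt p\,\sigma\,\|h_{k+1}\|_{L^p(\nu)} ,
\]
while the $L^\infty$ normalisation terminates it via $\|h_d\|_{L^p(\nu)}\le\|f^{(d)}\|_{Op,\infty}\le1$. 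To control the constants $\|f^{(k)}\|_{HS,2}$ appearing here I would invoke the centering hypotheses $\int\partial_{i_1\dots i_k}f\,d\nu=0$ for $k\le d-1$: by Poincar\'e, $\|f^{(k)}\|_{HS,2}^{2}=\sum_{i_1,\dots,i_k}\mathrm{Var}_\nu\big(\partial_{i_1\dots i_k}f\big)\le\sigma^{2}\|f^{(k+1)}\|_{HS,2}^{2}$, hence $\|f^{(k)}\|_{HS,2}\le\sigma^{d-k}\|f^{(d)}\|_{HS,2}\le\sigma^{d-k}$. Substituting these and unwinding the recursion from $k=d$ down to $k=1$ yields, by an immediate induction, $\|h_k\|_{L^p(\nu)}\le c_d\,\sigma^{d-k}(1+p)^{(d-k)/2}$, and therefore $\|f\|_{L^p(\nu)}\le c_d\,\sigma^{d}\,p^{d/2}$ for $p\ge2$.

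Finally I would expand $\int_{\R^n}\exp\!\big(c\sigma^{-2}|f|^{2/d}\big)\,d\nu=\sum_{m\ge0}\frac{1}{m!}\big(c\sigma^{-2}\big)^{m}\int|f|^{2m/d}\,d\nu$. For $m\ge d$ the moment bound gives $\int|f|^{2m/d}\,d\nu\le(c_d\sigma^{d})^{2m/d}(2m/d)^{m}$, so the $m$-th term is at most $\frac{m^{m}}{m!}\big(2cc_d^{2/d}/d\big)^{m}\le\big(2ecc_d^{2/d}/d\big)^{m}$; for $1\le m<d$ one bounds $\int|f|^{2m/d}\,d\nu$ by $\|f\|_{L^2(\nu)}^{2m/d}$ via Jensen and reaches a bound of the same shape; choosing $c=c(d)>0$ small enough makes the sum over $m\ge1$ at most $1$, so the integral is $\le2$. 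The genuinely non-elementary ingredient---and the step I expect to be the main obstacle---is the Aida--Stroock $L^p$ gradient inequality with $\big\|\,|\nabla g|\,\big\|_{L^p(\nu)}$ (rather than its $L^2$- or $L^\infty$-norm) on the right: it is precisely this $L^p$-for-$L^p$ feature that lets the recursion close, since the intermediate functions $h_k$ are neither bounded nor controllable by a crude variance estimate. A minor technical point is the measure-theoretic justification of $|\nabla h_k|\le h_{k+1}$ for the merely Lipschitz functions $h_k$ (via Rademacher's theorem or mollification); everything else is bookkeeping, with all constants depending only on the fixed $d$.
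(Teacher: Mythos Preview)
The paper does not actually prove this theorem. It is quoted from \cite{BobGoSam} in the ``Further comments'' section, and the only indication the paper gives about the argument is the sentence: ``Their proof relies on the logarithmic Sobolev inequality together with some comparison of moments.'' There is therefore nothing to compare your proposal against in this paper.

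That said, your outline is a faithful rendition of exactly the strategy the paper alludes to: the Aida--Stroock moment inequality is the ``comparison of moments'' carried by the logarithmic Sobolev inequality, and the recursion on $h_k=|f^{(k)}|_{Op}$ together with the Poincar\'e step on $\|f^{(k)}\|_{HS,2}$ is the standard way to close it. One point to watch: the theorem, as stated (and as emphasised by the remark that $c=1/(8e)$ is admissible), asserts a constant $c$ \emph{independent of $d$}, whereas your final step explicitly takes $c=c(d)$. To recover a universal $c$ you need to track the constants through the recursion more carefully, checking that $c_d^{2/d}/d$ stays bounded; as written, your argument delivers only a $d$-dependent constant.
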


\begin{rem} A possible choice is $c= 1/(8e)$. Note that, by integration by parts, if $\mu$ is the standard Gaussian measure $\gamma_n$, the conditions $\int_{\R^n}fd\nu=0$ and $\int_{\R^n}\partial_{i_1\ldots i_k}fd\nu=0$ are satisfied, if $f$ is orthogonal to all polynomials of (total) degree at most $d-1$. Such concentration's results for non Lipschitz functions (which are orthogonal to some part of an orthonormal basis) have been already obtained in various papers, we refer to the article \cite{BobGoSam} and references therein for more details.
\end{rem}

Their proof relies on the logarithmic Sobolev inequality together with some comparison of moments. Recall that logarithmic Sobolev's inequality is equivalent to the hypercontractive property of the associated semigroup (cf. \cite{BGL}). We ask if it is possible to recover their results with semigroup arguments ? In particular, is it possible to prove (and maybe improve by a dimension factor) Theorem \ref{thm.bobkov.gotze.sambale} (for $d=2$) with Talagrand inequality at order two from Theorem \ref{thm.talagrand.ordre2.gaussien} ?

\subsection{Gaussian influences}

In \cite{NKS}, the authors extended the notion of influence \ref{eq.influence} to a continuous setting. This notion has also been investigated in \cite{CoLed} (cf. Theorem $6$, p.$15$). This particular theorem relies on a variation on Talagrand inequality. In a Gaussian context, they obtained the following result 

\begin{thm}[Cordero-Erausquin, Ledoux]\label{thm.coled.2}
Let $f\,:\,\R^n\to\R$ be a smooth function such that $|f|\leq 1$, then 

\[
{\rm Var}_{\gamma_n}(f)\leq C\sum_{i=1}^n\frac{\|\partial_i f\|_1(1+\|\partial_i f\|_1)}{\big[1+\log^+(\frac{1}{\|\partial_if\|_1})\big]^{1/2}}
\]
for some universal constant $C>0$.
\end{thm}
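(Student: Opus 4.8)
The plan is to run exactly the semigroup‑plus‑hypercontractivity scheme used for Theorem~\ref{thm.talagrand.ordre.superieur.cube.discret.ordre2}, now along the Ornstein--Uhlenbeck semigroup, exploiting the bound $|f|\le 1$ to control an $L^\infty$ norm of the smoothed first derivatives. Write $a_i=\|\partial_i f\|_1$. First I would dispose of the easy regime: since $|f|\le 1$ one has ${\rm Var}_{\gamma_n}(f)\le 1$, so if $a_i\ge\delta_0$ for some fixed $\delta_0\in(0,1)$ and some $i$, then the corresponding term on the right‑hand side is already $\ge c(\delta_0)>0$ and we are done after enlarging $C$. Hence I may assume $a_i<\delta_0$ for all $i$, so that $1+\log^+(1/a_i)\asymp\log(1/a_i)$ and $1\le 1+a_i\le 2$.

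The starting point is the dynamical representation of the variance (as in the proof of Theorem~\ref{thm.developpement.variance.taylor}),
\[
{\rm Var}_{\gamma_n}(f)=2\int_0^\infty e^{-2t}\int_{\R^n}|P_t(\nabla f)|^2\,d\gamma_n\,dt=2\sum_{i=1}^n\int_0^\infty e^{-2t}\|P_t(\partial_i f)\|_2^2\,dt,
\]
so it is enough to bound each $J_i:=\int_0^\infty e^{-2t}\|P_t(\partial_i f)\|_2^2\,dt$ by $Ca_i/\sqrt{\log(1/a_i)}$. I would use two input estimates. On the one hand $P_t$ is an $L^1(\gamma_n)$‑contraction, so $\|P_t(\partial_i f)\|_1\le a_i$. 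On the other hand, the commutation $\nabla P_t=e^{-t}P_t\nabla$ \eqref{eq.commutation.ou} together with Gaussian integration by parts \eqref{eq.ipp.ou} applied to the kernel representation of $P_t$ gives, when $|f|\le1$,
\[
\|P_t(\partial_i f)\|_\infty=e^{t}\|\partial_i(P_tf)\|_\infty\le\frac{c_0}{\sqrt{1-e^{-2t}}},\qquad c_0=\sqrt{2/\pi}.
\]
Interpolating the $L^{1+e^{-t}}$ norm between $L^1$ and $L^\infty$ and then using Nelson's hypercontractivity \eqref{eq.hypercontractivite.ou} on the half‑time semigroup, $\|P_t(\partial_i f)\|_2=\|P_{t/2}(P_{t/2}\partial_i f)\|_2\le\|P_{t/2}(\partial_i f)\|_{1+e^{-t}}$, I obtain
\[
\|P_t(\partial_i f)\|_2^2\ \le\ a_i^{2}\Big(\frac{c_0}{a_i\sqrt{1-e^{-t}}}\Big)^{2e^{-t}/(1+e^{-t})},\qquad t>0 .
\]
The point is that the exponent $2e^{-t}/(1+e^{-t})$ is close to $1$ for small $t$ (exactly where $\sqrt{1-e^{-t}}\sim\sqrt t$ makes the crude bound large) and tends to $0$ as $t\to\infty$, so this refined inequality is what produces the logarithmic gain.

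Then I would estimate $J_i$ by splitting $\int_0^\infty=\int_0^{t_0}+\int_{t_0}^\infty$ with $t_0=\log\log(1/a_i)$. On $[t_0,\infty)$ the exponent is $\le 2e^{-t_0}=2/\log(1/a_i)$, so the bracketed factor is bounded by a numerical constant and the tail contributes $O\big(a_i^2/(\log(1/a_i))^2\big)$, which is below the target. On $[0,t_0]$, writing the integrand of $J_i$ as $e^{g(t)}$ and expanding near the origin, $g(t)=\log(c_0a_i)-\tfrac12\log t-\tfrac t2\log(1/a_i)+O\big(t\log(1/t)\big)$, whence
\[
\int_0^{t_0}e^{g(t)}\,dt\ \le\ C\,a_i\int_0^\infty t^{-1/2}e^{-\frac t2\log(1/a_i)}\,dt\ =\ Ca_i\sqrt{\frac{2\pi}{\log(1/a_i)}} .
\]
Combining, $J_i\le Ca_i/\sqrt{\log(1/a_i)}+Ca_i^2/(\log(1/a_i))^2\le C'a_i(1+a_i)/\sqrt{1+\log^+(1/a_i)}$, and summing over $i$ finishes the argument.

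The main obstacle is the last Laplace‑type integral estimate: one has to check that the interaction of the $\sqrt{1-e^{-t}}\sim\sqrt t$ singularity at $t=0$ with the slowly varying exponent $2e^{-t}/(1+e^{-t})$ yields exactly the factor $(\log(1/a_i))^{-1/2}$, rather than a full power or none. This bookkeeping is cleanest after the change of variables $v=1+e^{-t}$ already used in the proof of Theorem~\ref{thm.talagrand.ordre.superieur.cube.discret.ordre2}, which essentially linearizes the exponent near $v=2$; the only genuinely Gaussian‑specific ingredient is the $L^\infty$ bound on $P_t(\partial_i f)$, which is where the hypothesis $|f|\le1$ enters and which is immediate from the kernel representation of $P_t$.
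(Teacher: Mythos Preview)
This statement is quoted from \cite{CoLed} (their Theorem~6) and is not proved in the present note; Section~\ref{five} only cites it as background. So there is no ``paper's own proof'' to compare against. That said, your strategy---the dynamical variance representation, the kernel bound $\|P_t(\partial_i f)\|_\infty\le c_0(1-e^{-2t})^{-1/2}$ coming from $|f|\le1$, half-time hypercontractivity $\|P_t(\partial_i f)\|_2\le\|P_{t/2}(\partial_i f)\|_{1+e^{-t}}$, and $L^1$--$L^\infty$ interpolation---is precisely the mechanism behind the Cordero--Erausquin--Ledoux argument, and your outline is correct in spirit and in its final estimate.

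There is, however, one real slip in the Laplace step on $[0,t_0]$. You expand $g$ ``near the origin'' with remainder $O(t\log(1/t))$ and conclude $\int_0^{t_0}e^{g(t)}\,dt\le Ca_i\int_0^\infty t^{-1/2}e^{-\frac t2\log(1/a_i)}\,dt$. But the exponent is $\beta(t)=1-\tanh(t/2)$, and since $\tanh(x)<x$ for $x>0$ one has $\beta(t)>1-\tfrac t2$ for \emph{every} $t>0$; the omitted term $\big(\beta(t)-(1-\tfrac t2)\big)\log(1/a_i)$ is therefore positive and is not uniformly $O(t\log(1/t))$ as $a_i\to0$, so replacing $\beta(t)$ by $1-t/2$ gives a \emph{lower} bound on the integrand and your displayed inequality points the wrong way. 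The repair is the one you already hint at: in the variable $v=1+e^{-t}$ one has $\beta=2(v-1)/v\le v/2$ (because $(v-2)^2\ge0$), which is the right-signed linearization at $v=2$; equivalently, in the $t$ variable, concavity of $\tanh$ gives $\tanh(t/2)\ge t\tanh(\tfrac12)$ on $[0,1]$, hence $\beta(t)\le 1-c_1t$ with $c_1=\tanh(\tfrac12)$, while on $[1,\infty)$ the crude bound $\beta(t)\le 2/(e+1)<1$ suffices. Either route makes the Gamma-integral computation rigorous (with $c_1$ in place of $\tfrac12$) and your conclusion $J_i\le C'a_i(1+a_i)\big/\sqrt{1+\log^+(1/a_i)}$ then stands.
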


This inequality is of particular interest when $f=1_A$ (or some smooth approximation) for some subset $A$ in $\R^n$. Indeed, $\|\partial_i f\|_1$ can be seen as the geometric influence $I_i(A)$ of the $i$-th coordinate on the set $A$ and, if $\gamma_n(A)=a$, it can be proved (cf. Corollary $7$, p.$17$ in \cite{CoLed}) that 

\[
I_i(A)\geq \frac{a(1-a)\log n^{1/2}}{Cn}.
\]
As observed by Bouyrie (cf. \cite{Boubou}), it is natural to ask if some variations around Theorem \ref{thm.talagrand.ordre2.gaussien} can be of any help to precise the last inequality for some subset $A$. Indeed, Bouyrie noticed that the combination of the arguments presented in \cite{CoLed} (during the proof of Theorem \ref{thm.coled.2}) and Talagrand inequality (of order $2$) \ref{thm.talagrand.ordre2.gaussien} yields the following inequality : let $f\,:\,\R^n\to\R$ be smooth enough such that $|f|\leq 1$ then

\[
{\rm Var}_{\gamma_n}(f)-\bigg|\int_{\R^n}\nabla fd\gamma_n\bigg|^2\leq 8\sum_{i,j=1}^n\frac{\|\partial_{ij} f\|_1}{1+\log (1/\|\partial_{ij}f\|_1)}.
\]

In particular, when $f$ is a smooth approximation of $1_A$ (with $A\subset\R^n$), notice that in this case, by integrations by parts, the left-hand-side corresponds to
\[
\gamma_n(A)\big(1-\gamma_n(A)\big)-b(A)
\]
where $b(A)$ designs the barycenter of A defined as $\int_{A}xd\gamma_n(x)$. However, the right hand side seems more complicated to interpret geometrically.  We wonder if it can be of any significance if $A$ is chosen to be a half-space.

\bigskip

\textit{Acknowledgment. This work has been initiated during my thesis and I thank my Ph.D advisor M. Ledoux for introducing this problem to me and for fruitful discussions. I am also indebted to K. Oleszkiewicz for several comments and precious advices. I also want to thank C. Houdr\'e for kindly pointing out to me the reference \cite{HPAS} and R. Kumolka for his help with the linguistic. Finally, I warmly thank the anonymous referee and R. Bouyrie for helpful comments in improving the exposition.}\\

\bibliographystyle{plain}

\end{document}